\newcommand{\cConst}{c} % for notation throughout the proof
\newcommand{\cConstVal}{1/8e^2} % for giving the correct value below
\newcommand{\cConstValFrac}{/8e^2} % for use in our 2 main theorems
\newcommand{\email}[1]{\href{mailto:#1}{#1}}
\begin{document}

\title{Risk Consistency of Cross-Validation with Lasso-Type Procedures}
\author{Darren Homrighausen\\
  Department of Statistics\\Colorado State University\\ Fort
  Collins, CO 80523\\\email{darrenho@stat.colostate.edu} 
  \and Daniel J. McDonald \\
  Department of Statistics\\ Indiana University \\ Bloomington,
  IN 47408\\\email{dajmcdon@indiana.edu}
}

\maketitle

\begin{abstract}
  The lasso and related
  sparsity inducing algorithms have been the target of substantial
  theoretical and applied 
  research. Correspondingly, many results are known about their
  behavior for a fixed or optimally chosen tuning parameter
  specified up to unknown constants. In
  practice, however, this oracle tuning parameter is inaccessible
  so one  must use the data to select one. Common statistical practice is to use
  a variant of cross-validation for this task.  However,
  little is known about the theoretical properties of the
  resulting predictions with such
  data-dependent methods.  We consider
  the high-dimensional setting with random design wherein the number
  of predictors $p$ grows with the number of
  observations $n$. Under typical assumptions on the data generating
  process, similar to those in the literature, we recover oracle rates
  up to a log factor when choosing the tuning parameter with
  cross-validation. Under weaker conditions, when the true model is
  not necessarily linear, we show that the 
  lasso remains risk consistent relative to its linear
  oracle. We also
  generalize these results to the group lasso and
  square-root lasso and investigate the predictive and model selection performance of
  cross-validation via simulation.
\vspace{9pt}
\noindent {\it Key words and phrases:}
  Linear oracle; Model selection; Persistence;
  Regularization
\end{abstract}

\section{Introduction}
\label{sec:intro}

Following its introduction in the
statistical~\citep*{Tibshirani1996} and signal
processing~\citep*{ChenDonoho1998} communities, $\ell_1$-regularized
linear regression has become a fixture as both a data analysis tool
% \citet*{LeeZhu2010,ShiWahba2008} 
and as a subject for deep theoretical
investigations.
% \citet*{FuKnight2000,GreenshteinRitov2004,MeinshausenBuhlmann2006} 
In particular, for a response vector $Y \in \mathbb{R}^n$, design matrix
$\mathbb{X} \in \mathbb{R}^{n \times p}$, and tuning parameter $\lambda$, we
consider the lasso problem of finding
\begin{equation}
  \label{eq:lagrangian}
  \hat{\beta}_\lambda = \argmin_\beta \frac{1}{n}\norm{Y-\X\beta}_2^2 +
  \lambda\norm{\beta}_1 
\end{equation}
for any $\lambda$, where
$\norm{\cdot}_2$ and $\norm{\cdot}_1$ indicate the Euclidean and 
$\ell_1$-norms respectively.
An equivalent but less computationally convenient specification of the
lasso problem given by Equation \eqref{eq:lagrangian} is the
constrained optimization version:
\begin{equation} \betat{t} := \hat\beta(\B_t) \in
  \argmin_{\beta\in\B_t} \frac{1}{n}\norm{Y - \X\beta}_2^2
  \label{eq:optimization}
\end{equation} where $\B_t := \{ \beta : \norm{\beta}_1 \leq t \}$. 
By convexity, for each $\lambda$ (or $t$), there is
always at least one solution to these optimization problems.  While it is
true that the solution is not necessarily unique if 
rank$(\mathbb{X}) < p$, this detail is unimportant for our
purposes, and we abuse notation slightly by referring to
$\betat{\lambda}$ as
`the' lasso solution.  

These two optimization problems are equivalent mathematically, but they
differ substantially in practice.  Though the constrained optimization problem 
(\refeq{eq:optimization}) can be solved via quadratic programming,
most algorithms use the Lagrangian formulation (\refeq{eq:lagrangian}).  In this paper,
we address both estimators as each is more amenable to theoretical
treatment in different contexts.

The literature contains numerous results regarding the statistical
properties of the lasso, and, while it is beyond the scope of this paper to give a
complete literature review, we highlight some of these results here.
Early results about the asymptotic distribution of the lasso
solution are shown in \citet*{FuKnight2000} under the assumption
that the sample covariance matrix has a nonnegative definite
limit and $p$ is fixed. Many authors \citep*[e.g.][]{DonohoElad2006,MeinshausenBuhlmann2006,MeinshausenYu2009,
Wainwright,ZhaoYu2006,Zou2006} have investigated model selection properties of
the lasso---showing that when the best predicting model is linear and sparse,
the lasso will tend to asymptotically recover those predictors. The literature 
has considered this setting under various ``irrepresentability''
conditions which ensure that the relevant predictors are not too
highly correlated with irrelevant ones. \citet*{bickel2009simultaneous}
analyze the lasso and the Dantzig 
selector \citep*{CandesTao2007} under restricted eigenvalue conditions
with an oracle tuning parameter. Finally,
\citet*{BelloniChernozhukov2014} develop results for a related method,
the square-root lasso, with heteroscedastic noise and oracle tuning
parameter. 

Theoretical results such as these and others,
depend critically on the choice of tuning parameters and are typically
of the form: if $t = t_n = o(n/\log p )^{1/4}$,  then $\betat{t_n}$
possesses some desired behavior (correct model selection, risk consistency,
sign consistency, et cetera). However comforting results of this type are,
this theoretical guidance says little about the properties of the
lasso when the tuning parameter is chosen in a data-dependent way.

In this paper, we show that the lasso under random design with
cross-validated tuning parameter is indeed risk consistent under some
conditions on the joint distribution of the design matrix
$\mathbb{X}$ and the response vector $Y$. Additionally, we demonstrate that
our framework can be used to show similar results for other
lasso-type methods. Our results build on the previous theory
presented in \citet*{HomrighausenMcDonald2014} and \citet*{homrighausen2013lasso}.
 \citet*{HomrighausenMcDonald2014} proves risk
consistency for cross-validation under strong conditions on the data
generating process, most notably $n > p$, and on the cross-validation
procedure (requiring leave-one-out CV). The results in this paper
differ from those in \citet*{homrighausen2013lasso} in a number of
ways. The current paper examines the Lagrangian 
formulation of the lasso problem under typical conditions, weakens the
conditions on an upper bound for $t$, provides more refined results via
concentration inequalities, examines the influence of $K$, and
includes related results for the group lasso and the square-root lasso.

\subsection{Overview of results}
\label{sec:overview-results}

The criterion we focus on for this paper is risk consistency,
(alternatively known as persistence).  That is, we investigate the
difference between the prediction risk of the lasso estimator with
tuning parameter estimated by cross-validation and the risk of the best
linear oracle predictor (with oracle tuning parameter). Risk
consistency of lasso has previously been 
studied by
\citet*{GreenshteinRitov2004,BuneaTsybakov2007,Geer2008} and \citet*{bartlett2012}. Their
results, in contrast to ours,
assume that the tuning parameter is selected in an oracle
fashion. 

We present two main results which make progressively stronger
assumptions on the data generating process and use both forms of the
lasso estimator. The first imposes strong conditions
on the design matrix, assumes the linear model is true, and that this
linear model is sparse. The second is more general and allows
the true model to be neither linear nor 
correctly specified. For this reason, our focus is on risk consistency rather than
estimation of a ``true'' parameter or correct
identification of a ``true'' sparsity pattern. Additionally,
well-known results of \citet*{Shao1993} imply that cross-validation
leads to inconsistent model selection in general, suggesting that
results for sparse recovery may not exist. Although prediction is an
important goal, one is often interested in variable
selection for more interpretable models or follow-up
experiments. In light of the negative results in \citet*{Shao1993}, we
are unable to
offer theoretical results which promise consistent model selection by
cross validation, but simulations in \autoref{sec:simulations} suggest
that it performs well nevertheless. Both the
estimation and sparse recovery settings are frequently studied in the
literature assuming the tuning parameter is the oracle and
that the data generating model is linear
\citep*[e.g.][]{BuneaTsybakov2007,CandesPlan2009,DonohoElad2006,LengLin2006,MeinshausenBuhlmann2006,MeinshausenYu2009}. 

In the first case, when the truth is linear, we examine the Lagrangian
formulation in Equation \eqref{eq:lagrangian}. In this scenario, we
prove convergence rates which differ only by a $\log$ factor relative
to those achievable with the oracle tuning parameter
\citep*[e.g.][]{NegahbanRavikumar2012,BuhlmannGeer2011,BuneaTsybakov2007}. That
is, for an $s^*$-sparse 
linear model with restricted isometry conditions on the covariance of
the design, the risk of the cross-validated estimator approaches the
oracle risk at a rate of $O(s^*\log(p)\log(n)/n)$. Under more general conditions,
we follow the approach of \citet*{GreenshteinRitov2004} and examine the
constrained optimization form in Equation \eqref{eq:optimization}. Using
our methods, we require that the set of candidate
predictors, $\B_{t_n}$, satisfies $t_n=o\left(n^{1/4}/(m_n (\log
p)^{1/4+1/(2q))}\right)$ where $m_n$ is a sequence which approaches infinity
and $q$ characterizes the tail behavior of the data. This
is essentially as quickly as one could hope relative to
\citet*{GreenshteinRitov2004} under our more general assumptions
on the design matrix. We note however that, using
empirical process techniques, \citet*{bartlett2012} have been able to
improve the rate shown in \citet*{GreenshteinRitov2004} to $t_n=o\left(n^{1/2}/(\log^{3/2} n
    \log^{3/2} (np))\right)$ for sub-Gaussian design and oracle tuning
  parameter.

% The main contribution of this paper is to show that
% when cross-validation is used to choose the tuning parameter, the lasso remains persistent
% relative to the theoretically optimal, but empirically unavailable,
% non-stochastic oracle choice.  

\subsection{Tuning parameter selection methods and outline
  of the paper}
\label{sec:related-literature}

There are several proposed data-dependent techniques for choosing $t$ or $\lambda$.  \citet*{kato2009degrees}
and \cite{TibshiraniTaylor2012}
investigate estimating the ``degrees of
freedom'' of a lasso solution.
With an unbiased estimator of the degrees of freedom, the tuning parameter can be selected by
minimizing the empirical risk penalized by a function of this estimator. 
Note that this approach requires an estimate of the variance, which is 
nontrivial when $p > n$
\citep*{giraud2012high,HomrighausenMcDonald2015a}.
Another risk estimator is the adapted Bayesian
information criterion
proposed by \citet*{WangLeng2007} which uses a plug-in estimator based
on the
second-order Taylor's expansion of the risk.
\citet*{arlot2009data} and \citet*{Saumard2011} consider ``slope
heuristics'' as a method for penalty selection with Gaussian
noise, paying particular attention to the regressogram estimator in
the first case and piecewise polynomials with $p$ fixed in the
second. Finally, \citet*{sun2012scaled} present an algorithm to jointly
estimate the regression coefficients and the noise level. This results
in a data-driven value for the tuning parameter which possesses oracle
properties under some regularity conditions. 
% \attn{See \autoref{sec:simulations} for
%a further discussion and comparison of some of these methods.}

However, many authors
(e.g.\ \citealp{EfronHastie2004,FriedmanHastie2010,GreenshteinRitov2004,
Tibshirani1996,Tibshirani2011,ZouHastie2007} and as discussed by
\citealp[][Section 2.4.1]{VanDeGeer2011}) recommend 
selecting $t$ or $\lambda$ in the lasso problem by minimizing a
$K$-fold cross-validation estimator of the risk (see 
\autoref{sec:notation-assumptions} for the precise definition).
 Cross-validation is generally well-studied in
a number of contexts, especially model selection and risk
estimation. In the context of model selection,
\citet*{ArlotCelisse2010} give a detailed survey of the
literature emphasizing the relationship between the sizes of the
validation set and the training set as well as discussing the positive
bias of cross-validation as a risk estimator.

Some results supporting the use of cross-validation for statistical methods other
than lasso are known.  For instance, \citet*{Stone1974,Stone1977}
outlines various conditions 
under which cross-validated methods can result in good predictions.  More recently, 
\citet*{Dudoit2005} find finite sample bounds for various
cross-validation procedures.   
These results do not address the lasso nor parameter spaces with
increasing dimensions, and furthermore, apply to choosing the best
estimator from a finite collection of
candidate estimators.
\citet*{LecueMitchell2012} provide oracle inequalities related to using
cross-validation with lasso, however, it treats the problem as aggregating a dictionary of
lasso estimators with different tuning parameters, and the results are
stated for fixed $p$ rather than the high-dimensional setting
investigated here. Most recently, \citet*{FlynnHurvich2013}
investigate numerous methods for tuning parameter selection in
penalized regression, but the theoretical results hold only when $p/n
\rightarrow 0$ and not for cross-validation. In particular, the authors state
``to our knowledge the asymptotic properties of [$K$]-fold CV have not
been fully established in the context of penalized regression''
\cite*[p.~1033]{FlynnHurvich2013}. 

Rather than cross-validation, one may use information criteria such as
AIC \citep{akaike1974new} or BIC \citep{Schwarz1978}. These techniques are frequently advocated in
the literature \citep[for example][]{BuhlmannGeer2011,
  WangLi2007,Tibshirani1996,FanLi2001}, but the classical
asymptotic arguments underlying these methods apply only for $p$ fixed and
rely on maximum likelihood estimates (or Bayesian posteriors) for all
parameters including the noise. The theory in the high-dimensional
setting supporting these methods is less complete. Recent work has
developed new information criteria with supporting asymptotic results
uf $\textrm{rank}(\mathbb{X}) = p$
but is still allowed to increase. For example, the criterion developed by \citet{wang2009shrinkage}
selects the correct model asymptotically even if $p\rightarrow
\infty$ as long as $p/n\rightarrow 0$. 
If $p$ is allowed to increase more quickly than $n$, theoretical
results assume $\sigma^2$ is known to get around the difficult task of
high-dimensional variance estimation 
\citep{chen2012extended,ZhangShen2010,kim2012consistent,FanTang2013}. 

%None
%of this existing theory, however, is general enough to cover the
%high-dimensional setting we investigate with $p/n\rightarrow\infty$
%and $\sigma^2$ unknown.

In \autoref{sec:notation-assumptions}, we outline the mathematical
setup for the lasso prediction problem and discuss some empirical
concerns. \autoref{sec:mainresults} contains the main result and
associated conditions. \autoref{sec:simulations} compares
different choices of $K$ for cross-validation via
simulation, while
\autoref{sec:conclusion} summarizes our contribution and presents some
avenues for further research.

% \autoref{sec:proofSection} presents some useful 
% lemmas and provides the proof of our results, w

\section{Notation and definitions}
\label{sec:notation-assumptions}

%In this section, we present a mathematical setup of the cross-validation problem.
\subsection{Preliminaries} 
Suppose we observe data $Z_{i}^{\top} = (Y_{i},X_{i}^{\top})$
consisting of predictor variables, $X_{i} \in \mathbb{R}^{p_{n}}$, and response
variables, $Y_{i}\in\mathbb{R}$, where $Z_{i} \sim\mu_n$ are
independent and identically distributed for $i
=1,2,\ldots,n$ and the distribution $\mu_n$ is in some class $\F$ to be
specified.  Here, we use the notation $p_n$ to allow the number
of predictor variables to change with $n$. Similarly, we index the
distribution $\mu_n$ to emphasize its dependence on $n$. For simplicity, 
in what follows, we omit the subscript $n$ when there is
little risk of confusion.

We consider the problem of estimating a linear functional
$
f(\Xrv) =  \Xrv^\top \beta
$
for predicting $\Yrv$,
when $\Zrv^{\top} = (\Yrv,\Xrv^{\top}) \sim \mu_n$ is a new, independent random variable from the same
distribution as the data and
$\beta=(\beta_1,\ldots,\beta_{p})^{\top}$.  For now, we do not assume
a linear model, only the usual regression framework where
% \begin{equation}
% \label{eq:regressionFunction}
$\Yrv = f^*(\Xrv) + \epsilon,$
%\end{equation}
with $\epsilon$ and $\Xrv$ independent and $f^*$ is some unknown function.
We will use zero-based
indexing for $\Zrv$ so that $\Zrv_0 = \Yrv$.
To measure performance, we use the $L^2$-risk of the predictor $\beta$:
\begin{equation}
  \label{eq:risk-main}
  \risk{\beta} := \E_{\mu_n} \left[ (\Yrv
    -\Xrv^\top \beta)^2\right].
\end{equation} 
Note that this is a conditional expectation: for any estimator $\hat\beta$,
\begin{equation}
  \label{eq:risk-main2}
  \risk{\hat\beta} := \E_{\mu_n} \left[ (\Yrv
    -\Xrv^\top \hat\beta)^2 | Z_1,\ldots,Z_n\right],
\end{equation} 
and the expectation is taken only over
the new data $\Zrv$ and not over any observables which may be used to choose $\hat\beta$.

Using the $n$ independent observations $Z_1,\ldots,Z_n$, we can form
the response vector $Y := (Y_i)_{i=1}^n$ and the design matrix 
$\mathbb{X} :=  [X_1,\ldots,X_n]^{\top}$.  Then, given a
vector $\beta$, we write the squared-error empirical risk function as
\begin{equation}
  \label{eq:empirical-risk-main} 
  \emprisk{\beta} := \frac{1}{n}|| Y - \mathbb{X}\beta||_2^2 = \frac{1}{n}
  \sum_{i=1}^n (Y_i- X_i^{\top}\beta)^2.
\end{equation} 
Analogously to \refeq{eq:empirical-risk-main}, we write the
\emph{$K$-fold cross-validation estimator of the risk with respect to
the tuning parameter $t$}, which we
abbreviate to CV-risk or just CV,  as
\begin{equation}
  \cvrisk{V_n}{\gen}   =
  \cvrisk{V_n}{\hat\beta_\gen^{(v_1)},\ldots,\hat\beta_\gen^{(v_{K})}}
  := \frac{1}{K} \sum_{v \in V_n}
  \frac{1}{|v|} \sum_{r \in v} \left(Y_r - X_r^{\top}\hat\beta_\gen^{(v)}
  \right)^2.
    \label{eq:cv-risk-main}
\end{equation}
Here, $V_n = \{ v_1 , \ldots, v_{K} \}$ is a set of
validation sets, $\hat\beta_\gen^{(v)}$ is the estimator in
\refeq{eq:optimization} with the observations in the validation
set $v$ removed, and $|v|$ indicates the
cardinality of $v$.
Notice in particular that the cross-validation estimator of the risk is
a function of $\gen$ rather than a single predictor $\beta$---it uses
$K$ different predictors at a fixed $t$, averaging over their
performance on the respective held-out sets. Over the course of
the paper, we will
freely exchange $\lambda$ for $t$ in this definition. 

Lastly, we define the CV-risk minimizing choice of tuning parameter to be
\begin{equation}
  \label{eq:cv-objective} 
  \hat{\gen} = \argmin_{\gen\in\Gen} \cvrisk{V_n}{\gen}.
\end{equation} 
In our setting, we will take $\Gen$ (or $\Lambda$) as an interval subset of the nonnegative
real numbers which needs to be defined by the data-analyst.  The
choice of $\Gen$ is an important part of the performance of
$\hat\beta_{\hgen}$ and requires some explanation. First, we provide some insight into the 
computational load of using CV-risk to find $\hat{\gen}$.

\subsection{Computations}
In practice, CV-risk is known to be time consuming and somewhat unstable due to
the randomness associated with forming $V_n$.  For a fixed $v \in V_n$, 
suppose $\hat\beta_\lambda^{(v)}$ is found for the entire lasso path via the {\tt lars}  \citep{EfronHastie2004}
algorithm, which can be computed in the same computational complexity as a least squares
fit.  To fix ideas, suppose $n > p$, which means {\tt lars} has computational complexity $O(np^2 + p^3)$.
Hence, as the feature matrix $\X$ with $|v|$ rows removed has approximately 
$n(K-1)/K$ rows, $\hat\beta_{\lambda}^{(v)}$ can be computed  for all $\lambda$ in $O((n(K-1)/K)p^2 + p^3)$ 
time.
Repeating this $K$ times means the computational complexity for forming
$\cvrisk{V_n}{\lambda}$ over all $\lambda$ is $O( n(K-1)p^2 + p^3)$.  If $K$ is a fixed
fraction of $n$, CV-risk has computational complexity of order $(np)^2$, which is a factor of $n$
slower than a single lasso fit.  

Though more expensive on a single processor, CV-risk is readily
parallelizable over the $K$ folds and therefore (ignoring communication costs between processors)
CV-risk could actually be {\it faster} to compute than $\hat{R}$ (and hence $\hat\beta_\lambda$) as 

\noindent$n(K-1)/K < n$.  This advantage
is of course lost if we ultimately compute $\hat\beta_{\hat\lambda}$.  However, this computational advantage 
would be maintained  if we instead report
\begin{equation}
  \label{eq:cv-modelAverage} 
\tilde{\beta} = \frac{1}{K}\sum_{v \in V_n} \tilde\beta^{(v)},
\end{equation} 
where $\tilde\beta^{(v)}$ is the lasso estimate trained on the observations in $(v)$ with 
the tuning parameter
chosen by minimizing the empirical risk using the test observations in $v$. For $K = 4$,
for example, this would provide a 25\% reduction in computation time.  The properties of 
this approximation is an interesting avenue for further investigation.

%We explore the affect of $K$ on prediction error and model selection via simulation
%in \autoref{sec:simulations}.

% It is convenient to also
% write
% \begin{equation}
%   \label{eq:quadFormRisk} \E_{\mu_n} \left[ (\Yrv -\Xrv\beta)^2\right]
%   := \gamma^{\top} \Sn \gamma,
% \end{equation} where $\Sigma_{n,jk} = \E_{\mu_n}[ \Zrv_j \Zrv_k]$,
% $0\leq j,k,\leq p$ and $\gamma = (-1,\beta_1,\ldots,\beta_p)^{\top}$.

\subsection{Choosing the sets $\Lambda$ and $\Gen$}
\label{sec:Tn} 

The data analyst must be able to solve the optimization
problem in \refeq{eq:cv-objective}. 
For $\Lambda$, we must choose a lower bound: $\Lambda =
[\lambda_n,\infty)$. This implies we
must choose $\lambda_n$ as a function of the data. While it is
tempting to allow $\lambda_n=0$, this results in numerical 
and practical implementation issues if rank$(\X) < p$ and is
unnecessary as the theory will show. However, the lower bound will have a
nontrivial impact on the quality of the recovery, as choosing a value
too large may eliminate the best solutions.  
%Thus, treating
%the lower bound as a random function of the data is more realistic
%from a statistical practice point of view. 
We will see in the next
section that under some assumptions on the data generating process, we
can safely choose a particular $\lambda_n>0$ that allows order $\log n$
coefficients to be selected without compromising the quality of the estimator.

In the case of $\Gen$, an upper bound must be selected for any
grid-search optimization procedure. As we will impose much weaker
conditions on the data generating process, choosing such an upper
bound is more complicated.
%. Note that more advanced
%optimization techniques are not practical as the CV-risk objective function
%in \refeq{eq:cv-objective} is, in general, non-convex. 
% To define such an upper bound in a practical way, it should be large
% enough to include all possible estimators in a given class while still
% being finite.  This implies we must choose $\Gen$ to be a function of
% the data. 
% The specifics of $\Gen$
%depend on the regularizing set $\B_\gen$. 
Note that, by Equation~\eqref{eq:optimization},
$\hat\beta_t$ must be in the $\ell_1$-ball with radius $t$.  This
constraint is only binding \citep*{OsbornePresnell2000} if
\begin{equation*}
  \label{eq:upperBoundTn} 
  t < \min_{\eta \in \mathcal{K}} || \ols + \eta||_1 =: t_0,
\end{equation*} 
where $\ols = \hat\beta(\mathbb{R}^p)$ is a least-squares solution and
$\mathcal{K} := \{a : \mathbb{X}a = 0\}$.  Observe that $\mathcal{K} =
\{0\}$ if $n \geq p$ and otherwise $\mathcal{K}$ has dimension $p - n$,
which would imply $\ols$ is not unique. Both of these
  statements assume that the columns 
  of $\mathbb{X}$ contain a linearly independent set of size
  $\min\{n,p\}$. See \citet*{Tibshirani2013} for the more general
  situation. In either case, if $t \geq 
t_0$, then $\betat{t}$ is equal to a least-squares solution.
Based on this argument and the fact that $0 \in \mathcal{K}$, it is tempting to
define the upper bound to be  $\tparm_{\max} := ||\ols||_1$, 
where $\ols = (\X^{\top}\X)^{\dagger}\X^{\top}Y$ is the least-squares solution when
$(\cdot)^{\dagger}$ is given by the Moore-Penrose inverse.  However, this upper bound
has at least two problems.  First, although theoretically
well-defined, as with setting $\lambda_n=0$, it suffers from numerical 
and practical implementation issues if rank$(\X) < p$. The second
problem is that it grows much too fast, at least order $\sqrt{n}$,
therefore potentially including solutions which will have low bias but
very high variance.

Instead, we define an upper bound based on a rudimentary variance estimator
$
\tparm_{\max} := \norm{Y}_2^2/a_n,$
 % \label{eq:tmax}
% \end{equation}
where $(a_n)$ is an increasing sequence of constants.  Hence, we
consider the optimization interval 
to be $T = [0,\tparm_{\max}]$.   We defer fixing a particular sequence
$(a_n)$ until the next section. As our main results demonstrate, this
choice of $T$ is enough to ensure that a risk consistent
sequence of tuning parameters can be selected.

\begin{remark}
  We emphasize here that using a cross-validated tuning parameter
  involves more than simply allowing the tuning parameter to be
  selected in a data-dependent manner. In
  order to meaningfully analyze tuning parameter selection, we allow
  the search set $T$ and the tuning parameter to be chosen based on
  the data. 
\end{remark}

\begin{remark} 
  The computational implementation of CV for an interval $\Lambda$ (or $T$)
  deserves some discussion. Two widely used algorithms for lasso
  are {\tt glmnet} \citep{FriedmanHastie2010}, which uses coordinate
  descent, and {\tt lars} \citep{EfronHastie2004}, which leverages the
  piece-wise linearity of  the lasso solution as
  $\lambda$ varies (the lasso path).  The package {\tt 
    glmnet} is much faster than 
  {\tt lars}, however, {\tt glmnet} examines a grid of values,
  $\lambda_j \in \Lambda$, $j=1,\ldots,J$ say,
  and approximates the solution at each $\lambda_j$ with increasing
  accuracy depending on the number of iterations.  On the other hand, {\tt
    lars} evaluates the entire solution path exactly, such that it is
  theoretically possible to choose any $\lambda\in\Lambda$ via
  numerical optimization. However, optimizing Equation
  \eqref{eq:cv-objective} with standard solvers 
  can be difficult due to a possible lack of convexity. In
  both cases, the extremes of the interval $\Lambda$ will affect the
  quality of the solution.
\end{remark}

\section{Main results}
\label{sec:mainresults} 

In this section, we present results for both forms of the lasso
estimator in Equations~\eqref{eq:lagrangian}
and~\eqref{eq:optimization} under more and less restrictive
assumptions, respectively. First, we define the types of random
variables $\Zrv$ which we allow. 
To quantify the tail behavior of our data, we appeal to the notion of an Orlicz norm.

\begin{definition}
\label{def:Orlicz}
For any random variable $\xi$ and
function $\psi$ that is nondecreasing, convex, and $\psi(0) = 0$,
define the $\psi$-Orlicz norm
\[
||\xi||_{\psi} = \inf \left\{ c > 0 :  \E \psi \left( \frac{|\xi|}{c} \right) \leq 1 \right\}.
\]
\end{definition}
For any integer $r \geq 1$, we are interested in both the usual
$L^r$-norm given by $||\xi||_r := (\E |\xi|^r)^{1/r}$ and the  
norm given by choosing $\psi(x) = \psi_r(x) := \exp(x^r) - 1$
and represented notationally as $||\xi||_{\psi_r}$. Note that if
$||\xi||_{\psi_r} < \infty$, then for sufficiently large $x$, there are
constants $C_1,c_2$ such that 
$
P(|\xi| > x) \leq C_1 \exp(-c_2 x^r).
$

% The converse also holds in the sense that if \refeq{eq:expTailBound}
% holds, then $\norm{\xi}_{\psi_r} < \infty$ 
% \citep[e.g.][]{vanweak}. 
In the particular case of the $\psi_2$-Orlicz
norm, if $||\xi||_{\psi_2} < \kappa$
it holds that
  $\P(|\xi|>x) \leq 2 \exp(-x^2/\kappa^2)$
 and  $\E[|\xi|^{k}] \leq 2 \kappa^{k}\Gamma(k/2+1),$
where $\Gamma(t) = \int_0^\infty x^{t-1}e^{-x}dx$ is the Gamma
function. Additionally,  $(\E | \xi|^r)^{1/r} = || \xi||_r \leq r! || \xi||_{\psi_1}$ and
  $|| \xi||_{\psi_1} \leq (\log 2)^{1/r-1 } || \xi||_{\psi_r}$.   

% A random variable is said to be sub-Gaussian if and only if
% it has finite $\psi_2$-Orlicz norm. 
Before outlining our results, we define the following set of distributions.  
\begin{definition}
  \label{cond:Orlicz} 
Let $1 \leq q < \infty$ and $\Cf$ be a constant independent of $n$. Then define
  \[
  \F_q := \left\{ (\mu_n):  \mu_n \textrm{ is a measure on } \R^{p_n} \textrm{ and } \max_{1 \leq j,k \leq p} || \xi_j\xi_k -
    \mathbb{E}_{\mu_n} \xi_j\xi_k||_{\psi_q} \leq \Cf \right\}; 
  \]
  %This is right as we just need to bound an expectation (for a fixed n and p) with a constant independent of n
  that is, all centered cross products of a random variable $\xi\sim
  \mu_n$ have  uniformly finite $\psi_q$-Orlicz norm,  
  independent of $n$.
  Define the analogous set $\F_\infty$ which contains the measures $\mu_n$ such that  
  $|\xi_j| \leq \Cf$ almost surely $\mu_n$ for each
  $j=1,\ldots,p$. 
\end{definition} 

\begin{remark}
To make subsequent expressions as a function of $q$ make sense,
interpret for any $0< c < \infty$,  
$c/\infty = 0$ and $\infty/\infty = 1$.
\end{remark}

While $\mu_n$ is a measure on $\mathbb{R}^{p}$ indexing with $n$ is
more natural than indexing with $p$ 
given that our results include $p_n$ increasing with $n$.
\autoref{cond:Orlicz} is a common moment condition
\citep*{GreenshteinRitov2004,NardiRinaldo2008a,bartlett2012} for showing risk
consistency of lasso-type methods in high dimensional settings.
%We refer to this upper bounding constant for a given
%sequence of distributions as $\Cf$.  

% \begin{remark}
%   Something relating $\F_q$, Orlicz of $Y$ and the regression function.
% \end{remark}

% \begin{condition}
%   \label{cond:noise}
%   The noise, $\epsilon$, satisfies
%   $\norm{\epsilon}_{\psi_2} \leq \kappa < \infty$.
% \end{condition} 

% \begin{condition}
%   \label{cond:regressionFunction}
%   The regression function $f^*(X)$ satisfies
%   $\norm{f^*(X)}_{\psi_2}\leq F<\infty$.
% \end{condition} 

To simplify our exposition, we make the following condition about the size of the validation sets for CV.
\begin{condition}
  \label{cond:validation}
  The
   sequence of validation sets $\{V_n\}_{n=1}^\infty$ is such
  that, as $n\rightarrow\infty$, $\forall v \in V_n$, $|v| \asymp c_n$
  for some sequence $c_n$.  
  % Additionally, for each $n$, if $v, v' \in
  % V_n$ and $v \neq v'$, then  $v \cap v' = \emptyset$.
\end{condition}

This condition is intended to rule out some pathological cases of
unbalanced validation sets, but standard CV methods satisfy it. For
example, with $K$-fold cross-validation, 
we can take $c_n = \lfloor n/K \rfloor$, which is the integer
part of $n/K$.
For $n$ design random variables $X_1,\ldots,X_n$ and oracle prediction
function $f^*$, 
define $\mathbf{f}_n^* := (f^*(X_1),\ldots,f^*(X_n))^{\top}$. 
% Also,
% let $\esssup |f^*| := F^{1/2} < \infty$. 

\subsection{Persistence when $f^*$ is linear}
\label{sec:linear-model}

If we are willing to impose strong conditions on $\mu_n$, as in
~\citet*{BuneaTsybakov2007} and~\citet*{Meinshausen2007}, then we can
obtain cross-validated lasso estimators which achieve nearly oracle
rates. 

If we assume that $\E[\Yrv\ |\ \Xrv]=f^*(\Xrv) = \Xrv^\top \beta^* $, then we can write
the risk of an estimator $\hat{\beta}_{\hat{\lambda}}$ as
\[
R\left(\hat{\beta}_{\hat{\lambda}}\right) =
\underbrace{R\left(\hat{\beta}_{\hat{\lambda}}\right)- R(\beta^*)}_{\mbox{excess
    risk}} + \underbrace{R(\beta^*)}_{\mbox{noise}} =
\underbrace{R\left(\hat{\beta}_{\hat{\lambda}}\right)
  -\sigma^2}_{\mbox{excess risk}} + \underbrace{\sigma^2}_{\mbox{noise}},
\]
where $R(\beta^*)=\E[(\Yrv-\Xrv^\top\beta^*)^2] = \sigma^2$.
%\[
%R\left(\hat{\beta}_{\hat{\lambda}}\right) =
%\underbrace{R\left(\hat{\beta}_{\hat{\lambda}}\right)- R(\beta^*)}_{\mbox{excess
%    risk}} + \underbrace{R(\beta^*)}_{\mbox{noise}} =
%\underbrace{R\left(\hat{\beta}_{\hat{\lambda}}\right)
%  -\Xrv\beta^*}_{\mbox{excess risk}} + \underbrace{\sigma^2}_{\mbox{noise}}.
%\]
We write the \emph{excess risk} as
$
\mathcal{E}(\hat{\lambda}) := R(\hat{\beta}_{\hat{\lambda}})- \sigma^2
$
and prove a convergence rate for $\mathcal{E}(\hat{\lambda})$. In this
case, targeting the excess risk is the same as estimating the
conditional expectation of $\Yrv$, but if
$f^*(\Xrv)$ is not linear (as in \autoref{sec:nonlinear-model}), the excess risk remains a
meaningful way of assessing prediction behavior.

\begin{theorem}
  \label{thm:simpleCV}
   %Following \citet*{Meinshausen2007,BuneaTsybakov2007}, 
  Assume the following conditions:
 % \begin{itemize}
 % \item[M1:] $\norm{\Xrv}_\infty < \Cf$;
 % \item $\E[\Xrv]=0$ and $\E[\Xrv \Xrv^\top] = \Sigma$;
 % \item $\epsilon\sim N(0,\tau)$;
 % \item $\exists 0 < \nu <1$ such that $\Sigma$ and $\Sigma^{-1}$ are
 %   diagonally dominant at level $\nu$; that is $|\sigma_{jj}|
 %   \geq \nu + \sum_{j\neq i}|\sigma_{ij}|$ for all $1\leq j\leq
 %   p$. Note that this implies that
 %   $\Sigma-(1-\nu)\textrm{diag}(\Sigma)$ is positive semi-definite.
 % \item $\norm{\beta^*}_0=s^*$, which is independent of $n$.
 % \end{itemize}
  \begin{compactenum}[\bf {M}1:]
    \item There exists a constant $\Cf<\infty$ independent of $n$ such that $|\Xrv_j| < \Cf$ almost surely for all $j = 1,\ldots,p$. 
    \item $\E[\Xrv]=0$ and $\E[\Xrv \Xrv^\top] = \Sigma$.
    \item  $\epsilon\sim N(0,\sigma^2)$
    \item $\exists 0 < \nu <1$ such that $\Sigma$ and $\Sigma^{-1}$ are
      diagonally dominant at level $\nu$; that is 
      $|\sigma_{jj}| \geq \nu + \sum_{j\neq i}|\sigma_{ij}|$ for all $1\leq j\leq
      p$. 
    \item $||\beta^*||_0=s^*$, which is independent of $n$.
    \item $\lambda_{\min} \propto (\log{n}\log{p}/n)^{1/2}$.
    \item $\log{p} = o(n/\log{n})$.
    \end{compactenum}
    \vspace{.25in}
    
    \noindent Then,
    $
    \mathcal{E}(\hat{\lambda})  = O_p\left((s^*\log n \log p)/n\right).
    $
\end{theorem}
These conditions and the result warrant a few comments.
First, note that condition {\bf M4} implies that
$\Sigma-(1-\nu)\textrm{diag}(\Sigma)$ is positive
semi-definite. Second, as $s^*$ is
fixed, {\bf M6} and {\bf M7} ensure $\lambda_{\min}\rightarrow 0$
so that  $\Lambda$ will eventually allow models with $s^*$
covariates. Thus, the procedure is asymptotically consistent for model
selection \citep{Meinshausen2007}. 
% Third, we have that, under the 
% conditions of \autoref{thm:simpleCV}, $R(\beta^*)=\sigma^2$.  
Finally, note
that $ \mathcal{E}(\hat{\lambda})$ is random due to the term
$R(\hat\beta_{\hat{\lambda}})$.  Here, we emphasize that
$R(\hat\beta_{\hat{\lambda}})$ is a function 
of the data: the conditional expectation of
a new test random variable $\Zrv$ given the observed
data used to choose both $\hat{\lambda}$ and $\hat\beta_{\lambda}$ as
in \refeq{eq:risk-main2}. 

While the conditions of \autoref{thm:simpleCV} are strong, they are
typical of those used to prove persistence of the lasso estimator with
oracle tuning parameter
\citep[for the case of fixed design, see][]{NegahbanRavikumar2012}. For instance,
\citet{BuneaTsybakov2007} prove an oracle rate for the lasso of
$O(s^*\log{p}/n)$ with $\lambda_{\min} \propto \sigma\sqrt{\log{p}/n}$. 
Under similar conditions, our result with
cross-validated tuning parameter requires a larger $\lambda_{\min}$
(resulting in smaller models) and a slower convergence rate to the
oracle by a factor $\log{n}$. A reasonable choice of $\Lambda$
suggested by \autoref{thm:simpleCV} is
$\Lambda=[\lambda_{\min},\ \infty) = [(\log{p}\log{n}/n)^{1/2},\
  \infty)$.

\begin{proof}[Proof of \autoref{thm:simpleCV}]
  We have that, for all $g>0$, 
  \begin{align*}
    %\lefteqn{
    \P\left(R(\hat{\beta}_{\hat{\lambda}})-\sigma^2 > g\frac{s^*\log n \log
    p}{n}\right)%}\\
 &\leq \P\left(\inf_{\lambda \in \Lambda}\left(
   R(\hat{\beta}_{\lambda})-\sigma^2\right) > g\frac{s^*\log n
   \log 
   p}{2n}\right)\\
 &+ 2 \P\left(\sup_{\lambda\in\Lambda}\left|
   R(\hat{\beta}_{\lambda})-\sigma^2-\hat{R}(\hat{\beta}_\lambda)\right| > g\frac{s^*\log n
   \log p}{2n}\right),
  \end{align*}
  by the proof of Theorem 7 in \cite*{Meinshausen2007}. Note that
  $\hat{R}(\hat{\beta}_\lambda)$ is defined in  \refeq{eq:empirical-risk-main}.
Furthermore,
  the second term on the right hand side goes to 0 by that result. 
  Now note that
  \begin{align*}
    \P\left(\inf_{\lambda \in \Lambda}\left(
    R(\hat{\beta}_{\lambda})-\sigma^2\right) > g\frac{s^*\log n
    \log 
    p}{n}\right) &\leq \P\left(
                   R(\hat{\beta}_{\lambda_{\min}})-\sigma^2 > g\frac{s^*\log n
                   \log 
                   p}{n}\right).
  \end{align*}
  By 
  Corollary 1 in \cite*{BuneaTsybakov2007}, for any $\lambda$, we have
  that
  \begin{align*}
    \P\left( R(\hat{\beta}_\lambda)-\sigma^2 > g\frac{s^* \lambda^2}{1-\nu}\right)
    \leq 10 p^2 \exp\left( -c_1n\lambda^2\right) = 10 \exp\left(
    2\log p - c_1n\lambda^2\right).
  \end{align*}
  Setting $\lambda_{\min}$
  proportional to 
  $(\log n\log p/n)^{1/2}$  is enough for the upper bound to go to
  zero as $n\rightarrow\infty$ yielding the result.
\end{proof}

\subsection{Persistence when $f^*$ is not linear}
\label{sec:nonlinear-model}

In order to derive results under more general conditions, we move to
the linear oracle estimation framework.
For any $\gen$, define the oracle estimator with respect to $\gen$ as
\begin{equation*}
  \label{eq:estimator-oracle}
  \beta_{\gen} := \argmin_{\beta \in \B_{\gen}} \risk{\beta}.
\end{equation*} 
Suppose $\hgen$ is an estimator, such as by
cross-validation.  Then we can decompose the risk of an estimator
$\hat{\beta}_{\hat{\gen}}$ as follows:
\begin{equation*}
  \label{eq:riskdecomp}
  \risk{\hat{\beta}_\gen} = \underbrace{\risk{\hat{\beta}_{\hat{\gen}}} -
    \risk{\beta_\gen}}_{\mbox{excess risk}}
  \quad + \underbrace{\risk{\beta_{\gen}}-R_*}_{\mbox{approximation
      error}} + \quad \underbrace{R_*}_{\mbox{noise}},
\end{equation*}
where $R_*$ is the risk of the mean function $f^*$.
Because the data generating process is not necessarily linear, we
study the performance of an estimator $\hat\beta_{\hgen}$ via the 
{\it excess risk of} $\hat\beta_{\hgen}$ {\it relative to}
$\beta_{\gen}$, which we define as 
\begin{equation}
  \label{eq:excess-risk-general} 
  %\mathcal{E}(\Sest,\Sor) 
   \mathcal{E}(\hgen,\gen) 
  := \risk{\hat\beta_{\hgen}} - \risk{\beta_{\gen}}.
\end{equation} 
Here, $ \mathcal{E}(\hgen,\gen) $ depends on the cross-validated
tuning parameter $\hgen$ as well as the oracle set through
$\gen$. Focusing on \refeq{eq:excess-risk-general}, allows for meaningful
theory when the approximation error does not
necessarily converge to zero as $n$ grows. 
This is particularly important here,
as we do not assume that the conditional expectation of $\mathcal{Y}$
given $\mathcal{X}$ 
is linear. 
As $t \rightarrow \infty$, the approximation error decreases and hence
 we desire to take $t=t_n$ for some
increasing sequence $(t_n)$.
% Note that $ \mathcal{E}(\hgen,\gen) $
% is random due to the term $\risk{\hat\beta_{\hgen}}$.  Here, we
% emphasize that
% $\risk{\hat\beta_{\hgen}}$ is a function 
% of the data: the conditional expectation of
% a new test random variable $\Zrv$ given the observed
% data used to choose both $\hgen$ or $\hat\beta_{\gen}$ as in \refeq{eq:risk-main2}. 
As discussed in the introduction, \citet*{GreenshteinRitov2004} show
that if $t_n=o((n/\log p)^{1/4})$, then $\mathcal{E}(t_n, t_n)$
converges in probability to zero. \citet*{bartlett2012}
increase the size of this search set allowing $t_n=o(n^{1/2}/(\log^{3/2} n
    \log^{3/2} (np)))$ while still having
$\mathcal{E}(t_n,t_n)\xrightarrow{P} 0.$

\begin{theorem}
  \label{thm:lasso} 
  Let $(\mu_n) \in \F_q$ and suppose that \autoref{cond:validation} holds. 
  Then for any sequences $(a_n),\ (t_n)$ which satisfy $a_nt_n
  = o(n)$,
  \begin{equation}
    \label{eq:theLasso} 
    \P_{\mu_n} \left(  \mathcal{E}(\tCV,t_n) > \delta \right) 
    \leq
    \delta^{-1} \left(\Omega_{n,1} + \Omega_{n,2} \right) + 2\P(D_n^c) + \P(E_n^c).
  \end{equation}
  Here,
  % \begin{align*}
  %   \Omega_{n,1} & := \left[1 + \frac{A_n + n + \sqrt{A_n n}}{a_n}\right]^2 2(\log p)^{1/2}\left(n^{-1/2} + c_n^{-1/2} + (n - c_n)^{-1/2}/2\right) \\
  %   \Omega_{n,2} & := (1+ t_n)^2 2\sqrt{\frac{\log p}{n}}
  % \end{align*}
  % and  $A_n =  \norm{\mathbf{f}_n^*}_2^2$.
  \begin{align*}
    \Omega_{n,1} & := \left[1 + \frac{2n \Cf'}{a_n}\right]^2 
    \sqrt{(\log p)^{1 + 2/q}}\left(n^{-1/2} + c_n^{-1/2} + (n - c_n)^{-1/2}\right), \\
    \Omega_{n,2} & := (1+ t_n)^2 \sqrt{\frac{(\log p)^{1 +
                   2/q}}{n}},\\
    D_n &:= \left\{ \tparm_{\max}  \leq \frac{2n\Cf'}{a_n} \right\},\\
    E_n &:= \{ \tparm_{\max} \geq t_n  \}
  \end{align*}
  % and  $A_n =  \norm{\mathbf{f}_n^*}_2^2$.
and $\Cf'=\Cf(\log 2)^{1/q-1}$. 
\end{theorem}

\begin{remark}
The sets $D_n,\ E_n$ account for cases wherein $t_{\max}=\norm{Y}_2/a_n$
results in suboptimal sets $T$. If $(a_n)$ is such that $t_{\max}$
grows too quickly with non-negligible probability, then
cross-validation may result in low-bias but high variance
solutions. On the other hand, if $t_{\max}$ is too small, then we will
rule out possible estimators with lower risk. 
Here $D_n$ calibrates a high-probability upper bound on $t_{\max}$
based on $(\mu_n)$ and the choice of $(a_n)$ while $E_n$ ensures that
$t_{\max}$ will be large enough to include low risk estimators. 
\end{remark}
\begin{remark} 
  Usually in the oracle estimation framework, $\tCV = t_n$ and so the
  excess risk is necessarily nonnegative because the oracle predictor,
  $\beta_{t_n}$, is selected as the 
  risk minimizer over $\B_{t_n}$.  In that case, \refeq{eq:theLasso} would 
  correspond to convergence in probability.
  As we are
  examining the case where the optimization set is estimated,
  $\mathcal{E}(\tCV,t_n)$ may be negative.  However, we are only
  interested in bounding the case where $\mathcal{E}(\tCV,t_n)$ is positive, 
  i.e. the case where the estimator
  is worse than the oracle.
\end{remark} 

Define $b_n = \min\{n-c_n,c_n\}$. It follows that
$
(n^{-1/2} + c_n^{-1/2} + (n - c_n)^{-1/2}) \leq 3b_n^{-1/2}.
$
We state the following corollary to \autoref{thm:lasso}, which outlines the
conditions under which 
we can do asymptotically at least 
as well at predicting a new observation as the oracle linear model.  That is, when the
right hand side of \refeq{eq:theLasso} goes to zero.
\begin{corollary}
  \label{cor:lasso} 
 Under the conditions of  \autoref{thm:lasso}, if
 $a_n =n(\log p)^{1/4+1/(2q)} m_n/b_n^{1/4}$
 and 
 $t_n =o(b_n^{1/4}/m_n(\log p)^{1/4 + 1/(2q)})$, where $m_n$ is any
  sequence which tends toward infinity and $m_n = o(b_n^{1/4})$, we have that
for $n$ large enough and $p=o(\exp\{b_n^{q/(q+2)}\})$, 
%there exists a constant $\cConst$ independent of $n$ and $p$ such that
%\attn{changed from $p=o\left(e^{b_n^{q/(q+2)}}\right)$}
\[
 \P_{\mu_n} \left(  \mathcal{E}(\tCV,t_n) > \delta \right) 
        \leq
\frac{1}{m_n^2 \delta} \left( 1+\sqrt{\frac{b_n}{n}} \right)  
+2\exp(-n\cConstValFrac).
\]
In particular, $ \P_{\mu_n} \left(  \mathcal{E}(\tCV,t_n) > \delta \right)  \rightarrow 0$.
\end{corollary}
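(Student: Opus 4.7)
The plan is to substitute the specified sequences $a_n$ and $t_n$ directly into the bound of Theorem~\ref{thm:lasso} and verify asymptotic decay term by term. The work reduces to careful tracking of exponents, with one algebraic identity doing most of the work.

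First I would compute $n/a_n = b_n^{1/4}/((\log p)^{1/4+1/(2q)} m_n)$ from the given $a_n$. Using the observation $(n^{-1/2} + c_n^{-1/2} + (n-c_n)^{-1/2}) \leq 3 b_n^{-1/2}$ stated just before the corollary, and expanding $[1 + 2(F + 4\kappa^2) n/a_n]^2 = 1 + 4A + 4A^2$ with $A = (F+4\kappa^2) n/a_n$, multiplication by $(\log p)^{1/2+1/q}$ and then by $3 b_n^{-1/2}$ produces three contributions to $\Omega_{n,1}$ of order $(\log p)^{1/2+1/q}/b_n^{1/2}$, $(\log p)^{1/4+1/(2q)}/(m_n b_n^{1/4})$, and $1/m_n^2$ respectively.

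The \emph{key calculation} is that the assumption $p = o(\exp(b_n^{q/(q+2)}))$ is calibrated precisely so that
\[
\frac{q}{q+2} \cdot \left( \frac{1}{4} + \frac{1}{2q} \right) = \frac{q}{q+2} \cdot \frac{q+2}{4q} = \frac{1}{4},
\]
yielding $(\log p)^{1/4 + 1/(2q)} = o(b_n^{1/4})$ and hence $(\log p)^{1/2 + 1/q} = o(b_n^{1/2})$. Consequently the first two contributions above are of order $o(1)$ and $o(1/m_n)$, both dominated by $C/m_n^2$ for $n$ large, so $\Omega_{n,1} = O(1/m_n^2)$. For $\Omega_{n,2}$, substituting $t_n^2 = o(b_n^{1/2}/(m_n^2 (\log p)^{1/2+1/q}))$ into $(1 + t_n)^2 \sqrt{(\log p)^{1+2/q}/n}$ and using the same exponent relation to absorb the constant and cross terms gives $\Omega_{n,2} = O(\sqrt{b_n/n}/m_n^2)$.

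For the probability terms $\P(D_n^c)$ and $\P(E_n^c)$, which arise inside the proof of Theorem~\ref{thm:lasso} as concentration events for sample-level quantities, the sub-Gaussianity of $\epsilon$ in Condition~\ref{cond:noise} together with the moment control from $\mu_n \in \F_q$ should yield the stated exponential bounds $2e^{-n/2}$ and $e^{-n/8}$ by standard sub-Gaussian deviation inequalities applied to quantities such as $\norm{Y}_2^2/n$ and $\mathbf{f}_n^*$-based averages relative to their expectations. Combining all pieces in the bound of Theorem~\ref{thm:lasso} yields the claimed inequality, and since $m_n \to \infty$ while $e^{-n/2}$ and $e^{-n/8}$ vanish, the right-hand side tends to zero. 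The main obstacle is recognizing that the seemingly awkward hypothesis $p = o(\exp(b_n^{q/(q+2)}))$ is chosen so that the exponent arithmetic works out to exactly $1/4$, which is the exact rate needed to kill the $\log p$ factor in $\Omega_{n,1}$ relative to $b_n^{1/4}$; without this matching, the cross term in the expansion of $[1 + 2A]^2$ would dominate and the $1/m_n^2$ rate would be lost.
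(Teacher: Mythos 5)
Your proposal is correct and follows essentially the same route as the paper: substitute the prescribed $a_n$ and $t_n$ into $\Omega_{n,1}$ and $\Omega_{n,2}$, use the exponent identity $\frac{q}{q+2}\left(\frac{1}{4}+\frac{1}{2q}\right)=\frac{1}{4}$ coming from $p=o(e^{b_n^{q/(q+2)}})$ to absorb the $\log p$ factors, and control $\P(D_n^c)$ and $\P(E_n^c)$ by concentration of $\norm{Y}_2^2$ (the paper does this via Lemmas \ref{lem:tmaxUpperBound} and \ref{lem:tmaxLowerBound}, which apply Bernstein's inequality to $\norm{\bfep}_2^2$, exactly as you sketch). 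The only caveat, shared with the paper's own one-line verification, is that asserting the lower-order contributions (e.g.\ $(\log p)^{1/2+1/q}/b_n^{1/2}=o(1)$) are dominated by $C/m_n^2$ implicitly restricts how fast $m_n$ may grow.
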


\begin{remark}
The rate at which $\delta$ can be taken to zero quantifies the decay of the size of the `bad' set where 
$\mathcal{E}(\tCV,t_n)$ is large. For the corollary, both $m_n = o(b_n^{1/4})$ and $\delta^{-1} = o(m_n^{2})$.
Therefore, it is necessary for $\delta^{-1} = o(b_n^{1/2})$ and hence, $\delta$ 
must go to zero at a slower rate than $b_n^{-1/2}$.  
\end{remark}
\begin{remark}
As $q$ increases, which corresponds to $(\mu_n) \in \F_q$ putting less
mass on the tails of the centered interactions 
of the components of $\Zrv$, the faster the oracle set given by
$\B_{t_n}$ can grow.  That, is the weaker the  $\ell_1$-sparsity 
assumption of the linear oracle. When $q = \infty$, the
random variables have no tails and we get 
the fastest rate of growth for $t_n$; that is, $(b_n^{1/4}/(m_n(\log p)^{1/4}))$.
\end{remark}

The parameter $b_n$ controls the minimum size of the validation versus
training sets that comprise cross-validation. 
It must be true that $b_n$ is strictly less than $n$. To get the best
guarantee, $b_n$ should increase as fast as possible. 
Hence, our results advocate a cross-validation scheme where the
validation and training sets are asymptotically balanced, i.e.\
$c_n\asymp n-c_n$.  
This should be compared with the results in \citet*{Shao1993}, which
state that for model selection consistency, 
one should have
$c_n/n\rightarrow 1$. However, \citet*{Shao1993} presents results for
model selection while we focus on prediction error. 
Similarly, \citet*{Dudoit2005} provide oracle inequalities for
cross-validation and also advocate for the validation set to grow as fast
as possible. Note that for $K$-fold cross-validation, $c_n=\lfloor
n/K \rfloor$ so that $b_n=O(n)$.

Additionally, the rates $a_n$ and $m_n$ deserve comment.  It is
instructive to compare this 
choice of $\tmax$ with $\norm{Y}_2^2/n$, a standard estimate of the
noise variance in high dimensions \citep*[e.g.][p.\ 104]{VanDeGeer2011}.
If $a_n = n$, then $\norm{Y}_2^2/a_n$ is an overestimate of the variance due to the presence of the 
regression function $f^*$. Our results state that we must choose $a_n$
to increase slower than $n$, thereby increasing this overestimation
and enlarging the potential search set $T$.
While $a_n$ depends on several parameters, $b_n$, $n$, and $p$ are
known to the analyst.  Also, the choice 
of $q$ depends on how much approximation error the analyst is willing
to make. 
The $m_n$ term is required to slow the growth of $t_n$ just
slightly. While this shrinks the size of the set
$\B_{t_n}$ relative to that used by \citet*{GreenshteinRitov2004}, potentially
eliminating some better solutions, effectively $m_n$ quantifies their
requirement $t_n=o((n/\log p)^{1/4})$, making explicit the need for
$t_n$ to grow more slowly than $(n/\log p)^{1/4}$.
As such, if we set $b_n \asymp n$ and set $q = \infty$, 
we reaquire the rate shown by
\citet*{GreenshteinRitov2004}, where \autoref{cor:lasso}  
implies that both 
$ \P_{\mu_n} \left(  \mathcal{E}(\tCV,t_n) > \delta \right) \rightarrow 0$
and $t_n = o( (n/\log p)^{1/4} )$. 

While the entire proof is contained in the supplementary material, we provide a brief sketch here.
\begin{proof}[Proof sketch of \autoref{thm:lasso} and \autoref{cor:lasso}]
  In order to control the behavior of $\P_{\mu_n} \left(
    \mathcal{E}(\tCV,t_n) > \delta \right)$, we require a number of steps.
  \begin{compactenum}
  \item The first step is form the decomposition
    \begin{align*}
      \mathcal{E}( \hgen,\ngen ) 
      &= 
        \risk{\hat\beta_{\hgen}} - \risk{\beta_{\ngen}}
      = 
        \underbrace{\risk{\betat{\hgen}} -\cvrisk{V_n}{ \hgen}}_{(I)}+
      \\ &+
        \underbrace{\cvrisk{V_n}{\hgen} -  \cvrisk{V_n}{\mgen}}_{(II)} 
        + \underbrace{\cvrisk{V_n}{\mgen} - \emprisk{\betat{\ngen}}}_{(III)}
        + \underbrace{\emprisk{\betat{\ngen}} -
        \risk{\beta_{\ngen}}}_{(IV)}.
    \end{align*}
    Here, 
     $\emprisk{\betat{\ngen}}$ and 
     $ \cvrisk{V_n}{\hgen}$ are defined in 
\refeq{eq:empirical-risk-main} and
      \refeq{eq:cv-risk-main}, respectively.
  \item By standard rules of probability, it is enough to control
    each term (I) through (IV) on the intersection $D_n
    \cap E_n$ as long as $\P(D_n^c)$ and $\P(E_n^c)$ are both small.
  \item For each of the terms, we rewrite the
    difference in risks as a difference of quadratic forms involving the
    covariance matrices of the random variables and the extended
    coefficients. For example, 
    $
    \risk{\beta} = \E_{\mu_n}  (\Yrv -\beta^\top\Xrv)^2=
    \gamma^{\top} \Sn \gamma 
    $
    where $\Sigma_n = \E_{\mu_n}(\Zrv \Zrv^{\top})$ and $\gamma^{\top}=(-1,\beta^{\top})$,
    and
    $    
    \emprisk{\beta} = n^{-1} \norm{Y- \mathbb{X}\beta}_2^2 =
    \gamma^{\top} \Sign \gamma, 
    $
    where $\Sign = n^{-1}\sum_{i=1}^n Z_{i}Z_{i}^{\top}$. Thus, we
    have, for example,
    $
    (IV) = \hat{\gamma}^\top_{t_n} \Sign \hat{\gamma}_{t_n} -
    \gamma^\top_{t_n} \Sn \gamma_{t_n}.
    $
  \item After some algebraic manipulation and an application of
    H\"{o}lder's inequality, we can rewrite these
    differences as the product of the squared $L^1$-norm of the coefficient
    vector and the expected value of the $L^\infty$-norm of the
    difference between an empirical and expected covariance. For
    example
    $
    \risk{\beta} - \emprisk{\beta} \leq ||\gamma||^2_1||\Sn-\Sign||_\infty.
    $
  \item Finally, the $L^1$-norm of the coefficient vector is bounded
    by $t$ due to the constraint in Equation~\eqref{eq:optimization}
    while $\E||\Sn-\Sign||_\infty=O((\log (p)/n)^{1/2})$ by
    Nemirovski's inequality and some intermediate lemmas.
  \item To move to the corollary, we use the Orlicz condition on $(\mu_n)$ to show that
    $\P(D_n^c)$ and $\P(E_n^c)$ are both small with high
    probability. This ensures that 
    $t_{\max}<2n\Cf'/a_n$ and for any sequences
    $(a_n)$ and $(t_n)$ which satisfy $a_nt_n=o(n)$, $t_{\max}>t_n$. 
  \end{compactenum}
\end{proof}

We note here that our results generalize to other $M$-estimators which
use an $\ell_1$-constraint. In particular, relative to
the set of coefficients $\beta \in \B_{t_n}$ with $t_n=o\left(b_n^{1/4}/\left(m_n
(\log p) ^{1/4+1/(2q)}\right)\right)$, an
empirical estimator with cross-validated tuning parameter has
prediction risk which converges to the prediction risk of the oracle.
\begin{corollary}
  \label{cor:lassolike}
  Consider the group lasso \citep*{YuanLin2005}
  \[
  \hat{\beta_t} = \argmin\{n^{-1}\norm{Y-\X\beta}_2^2 :
    \sum_{g\in G}\sqrt{|g|}\norm{\beta_g}_2 \leq 
    t\}, 
  \]
  and the square-root lasso \citep*{BelloniChernozhukov2014}
  \[
  \hat{\beta_t} = \argmin\{ n^{-1} \norm{Y-\X\beta}_2 :
    \norm{\beta}_1 \leq t\}. 
  \]
  Let
  $t_n$ and $a_n$ be as in \autoref{cor:lasso}. Then,
  for $n$ large enough, $\log(p)=o(b_n^{q/(q+2)})$, and $\max_g \sqrt{|g|}=O(1)$, we have
  \[
  \P_{\mu_n} \left(  \mathcal{E}(\tCV,t_n) > \delta \right) 
  \leq
  \frac{1}{m_n^2 \delta} \left( 1+\sqrt{\frac{b_n}{n}} \right)  
  + 2e^{-n\cConstValFrac}.
  \]
\end{corollary}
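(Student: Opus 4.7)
The plan is to reduce both statements in \autoref{cor:lassolike} to \autoref{thm:lasso} and \autoref{cor:lasso} by showing that the feasible set of each procedure sits inside an $\ell_1$-ball of proportional radius, so the empirical-process bounds driving the lasso argument transfer with essentially no change.

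For the $\sqrt{\mbox{lasso}}$, my first step is to observe that the constraint set is literally $\B_t$ and that $x\mapsto x^2$ is strictly increasing on $[0,\infty)$. Hence the minimizer of $\frac{1}{n}\norm{Y-\X\beta}_2$ over the convex set $\B_t$ coincides with the minimizer of $\frac{1}{n}\norm{Y-\X\beta}_2^2$. Because the CV risk in \refeq{eq:cv-risk-main} scores held-out observations by squared prediction error regardless of the in-fold training loss, the CV-selected tuning parameter $\tCV$ and the resulting $\hat\beta_{\tCV}$ are identical to those produced by the constrained lasso. \autoref{thm:lasso} and \autoref{cor:lasso} therefore apply verbatim.

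Next I would turn to the group lasso and write $\mathcal{G}_t := \{\beta : \sum_{g\in G}\sqrt{|g|}\norm{\beta_g}_2 \leq t\}$. By Cauchy--Schwarz $\norm{\beta_g}_1 \leq \sqrt{|g|}\norm{\beta_g}_2$, so summing over groups yields $\norm{\beta}_1 \leq t$, i.e., $\mathcal{G}_t \subseteq \B_t$. The uniform-convergence and Orlicz-tail arguments underlying the proof of \autoref{thm:lasso} reference only the $\ell_1$-diameter of the candidate set together with the $\psi_q$ hypothesis on the centered cross products of $\Zrv$, so they control the empirical processes of interest over $\mathcal{G}_{t_{\max}}$ as well. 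The group-lasso oracle $\beta_{t_n} \in \mathcal{G}_{t_n} \subseteq \B_{t_n}$, and the probabilistic control of the event $\{t_n \leq t_{\max}\}$ through $\P(D_n^c)$ and $\P(E_n^c)$ depends only on $\norm{Y}_2^2$ and the deterministic rates $a_n, t_n$, not on the regularizer. The assumption $\max_g\sqrt{|g|}=O(1)$ is precisely what keeps the $\ell_1$-embedding radius proportional to $t_n$, so no factor is lost in matching the rate of \autoref{cor:lasso}.

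The main obstacle, in my view, is verifying that the empirical-process step in the proof of \autoref{thm:lasso} is stated at a level of generality that depends only on the $\ell_1$-radius of the candidate set and on the $\psi_q$ moment condition, rather than on lasso-specific machinery such as KKT conditions or a particular dual representation. Once this is confirmed, the decomposition of $\mathcal{E}(\tCV,t_n)$ and its bound in \refeq{eq:theLasso} reassemble identically for both procedures, and the vanishing upper bound of \autoref{cor:lasso} carries through unchanged.
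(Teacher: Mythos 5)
Your proposal is correct and follows essentially the same route as the paper: for the $\sqrt{\mbox{lasso}}$ the paper likewise exploits monotonicity of the square root (it verifies that the two empirical-minimizer inequalities \eqref{eq:empmin1} and \eqref{eq:empmin2} are preserved, whereas you note the slightly stronger fact that the constrained minimizers coincide outright), and for the group lasso it uses the same containment $\sum_{g}\sqrt{|g|}\,\norm{\beta_g}_2 \geq \norm{\beta}_1$ so that all the $\ell_1$-radius bounds and \autoref{lem:tmaxLowerBound} carry over, with the oracle now taken over the smaller group-norm ball. Your closing concern is exactly the right one to check, and it is satisfied: the proof of \autoref{thm:lasso} controls every term via \autoref{lem:quad-form}, i.e.\ through $\norm{\hat\gamma}_1^2\norm{\cdot}_\infty$ and the $\psi_q$ condition, with no lasso-specific KKT or dual machinery.
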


\section{Simulations}
\label{sec:simulations}

So far, though we have given some indication about the asymptotic order of $K$,
we haven't provided any guidance for the choice of $K$ with a fixed $n$ and $p$. 
In this section, we investigate the predictive and model selection  performance of $K$-fold
cross-validation for a range of $K$. 

We consider three criteria: prediction risk, sensitivity, and specificity.  For prediction risk, 
we approximate $R(\hat\beta_{\hat\lambda})$ by the empirical risk of 500 test observations.
For sensitivity and specificity, define the active set of a coefficient vector $\beta$ to be 
$\mathcal{S}(\beta) := \{j: |\beta_j| > 0\}$, along with 
$\mathcal{S}^* := \mathcal{S}(\beta^*)$ and  $\hat{\mathcal{S}} :=  \mathcal{S}(\hat\beta_{\hat\lambda})$
to be the active sets of $\beta^*$ and  $\hat\beta_{\hat\lambda}$,
respectively.  Then sensitivity = 
$|\mathcal{S}^* \cap \hat{\mathcal{S}}|/|\mathcal{S}^*|$ and 
specificity = 
$|(\mathcal{S}^*)^c \cap \hat{\mathcal{S}}^c|/|(\mathcal{S}^*)^c|$, where $|\cdot|$ counts the number of elements in
a set and $\mathcal{A}^c$ indicates the complement of a set $\mathcal{A}$.

\subsection{Simulation details}

\paragraph{Conditions:} For these simulations, we consider a wide range of possible conditions by varying the correlation in the design, $\rho$; the
number of parameters, $p$; the sparsity, $\alpha$; and the signal-to-noise ratio, SNR.  In all cases, we let $n = 100$, 
$p = 75, 350, 1000$, and set the
measurement error variance $\sigma^2 = 1$.  Lastly, we run each simulation condition combination 100 times.  For
these simulations, we assume that there exists a $\beta^*$ such that the regression function 
$f^*(X) = X^{\top}\beta^*$ in order to make model selection meaningful.

The design matrices are produced in two steps.  First, $X_{ij} \stackrel{i.i.d.}{\sim} N(0,1)$ for $1 \leq i \leq n$ and $1 \leq j \leq p$,
forming the matrix $\X \in \mathbb{R}^{n\times p}$.  Second, correlations are introduced by defining a matrix D that
has all off diagonal elements equal to $\rho$ and diagonal elements equal to one.  Then, we redefine $\X \leftarrow \X D^{1/2}$.
For these simulations, we consider correlations $\rho = 0.2, 0.5, 0.95$.

For sparsity, we take $s^* = \lceil n^{\alpha} \rceil$ and generate the $s^*$ non-zero elements of $\beta^*$ from a Laplace distribution
with parameter 1.  We let $\alpha = 0.1,0.33,0.5$.
Also, to compensate for the random amount of signal in each observation, we vary the signal-to-noise ratio,
defined to be 
$%\begin{equation*}
  \textrm{SNR} = \beta^{\top} D \beta.$
%\end{equation*}
We consider $\textrm{SNR} = 1$ and $10$. Note that 
as  the $\textrm{SNR}$ increases the observations go from a high-noise and low-signal regime to a low-noise and high-signal one.
Lastly, we consider two different noise distributions, $\epsilon \sim N(0,1)$ and $\epsilon \sim 3^{-1/2} t(3)$. 
Here $t(3)$ indicates a $t$ distribution with 3 degrees of freedom and
the $3^{-1/2}$ term makes the variance equal to 1.  Finally, we take
$K=\{3,\ 10,\ 25,\ 50,\ 75,\ 100\}$, the last case being leave-one-out CV.

\subsection{Simulation results}
Though we have run all of the above simulations, to save space we have only included plots from a select number of simulation conditions that are the most informative.  For prediction risk, all considered $K$'s result in remarkably 
similar prediction risks.  In \autoref{fig:predRisk}, for $p=1000$, SNR = 1, and $\rho = 0.9$, we see that taking
$K = 3$ or $K = 10$ results in slightly smaller prediction risks.
This is comforting as both of these values of $K$ 
are used frequently by default and they are the least computationally demanding.
\begin{figure} %This is SNR by Rho, with rho omitting 0.8
\centering
\begin{subfigure}[h]{.32\textwidth}  
	\includegraphics[width=1.5in,trim = 0 15 28 55,clip]
	{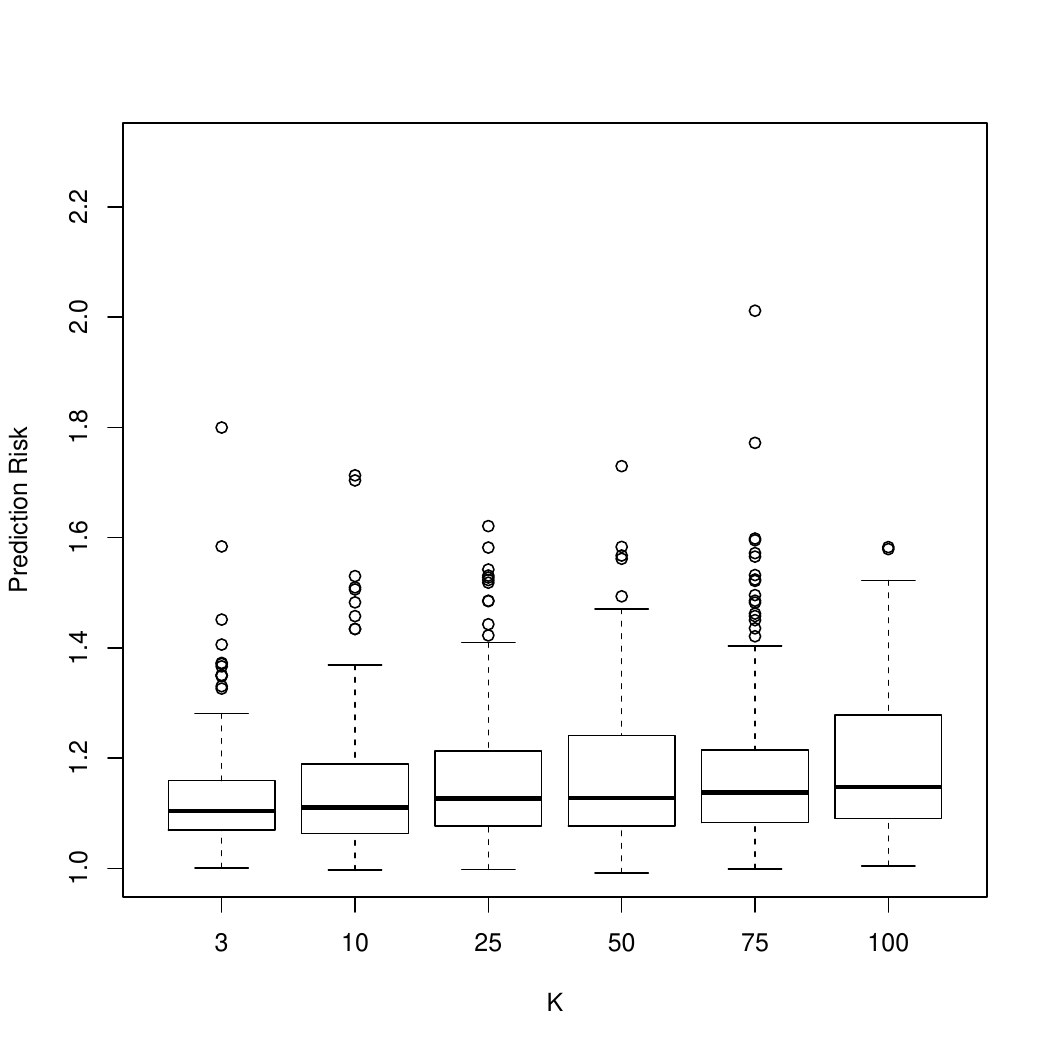}
	\caption*{$p = 1000$, $\alpha = 0.1$}
\end{subfigure}
\begin{subfigure}[h]{.32\textwidth}  
	\includegraphics[width=1.5in,trim = 0 15 28 55,clip]
	{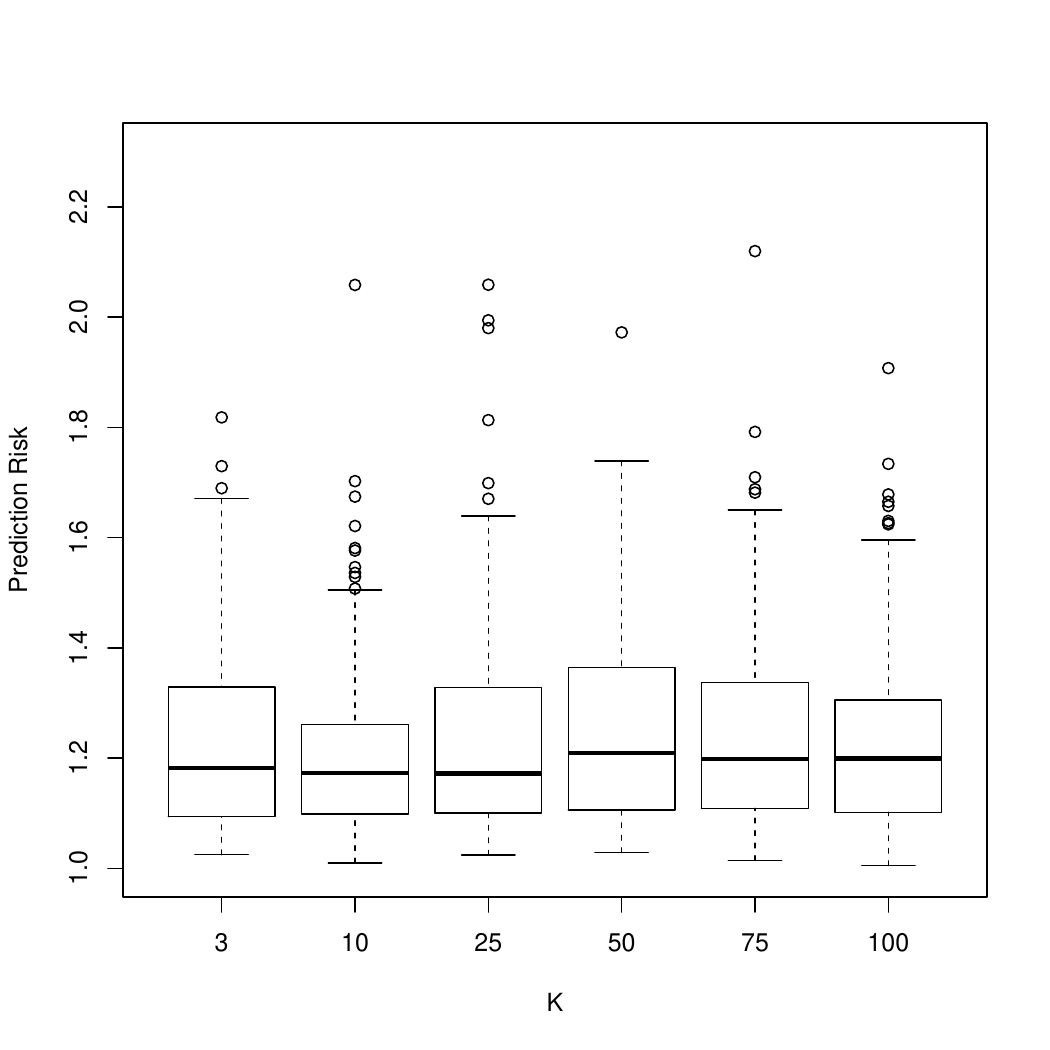}
	\caption*{$p = 1000$, $\alpha = 0.4$}
\end{subfigure}
\begin{subfigure}[h]{.32\textwidth}  
	\includegraphics[width=1.5in,trim = 0 15 28 55,clip]
	{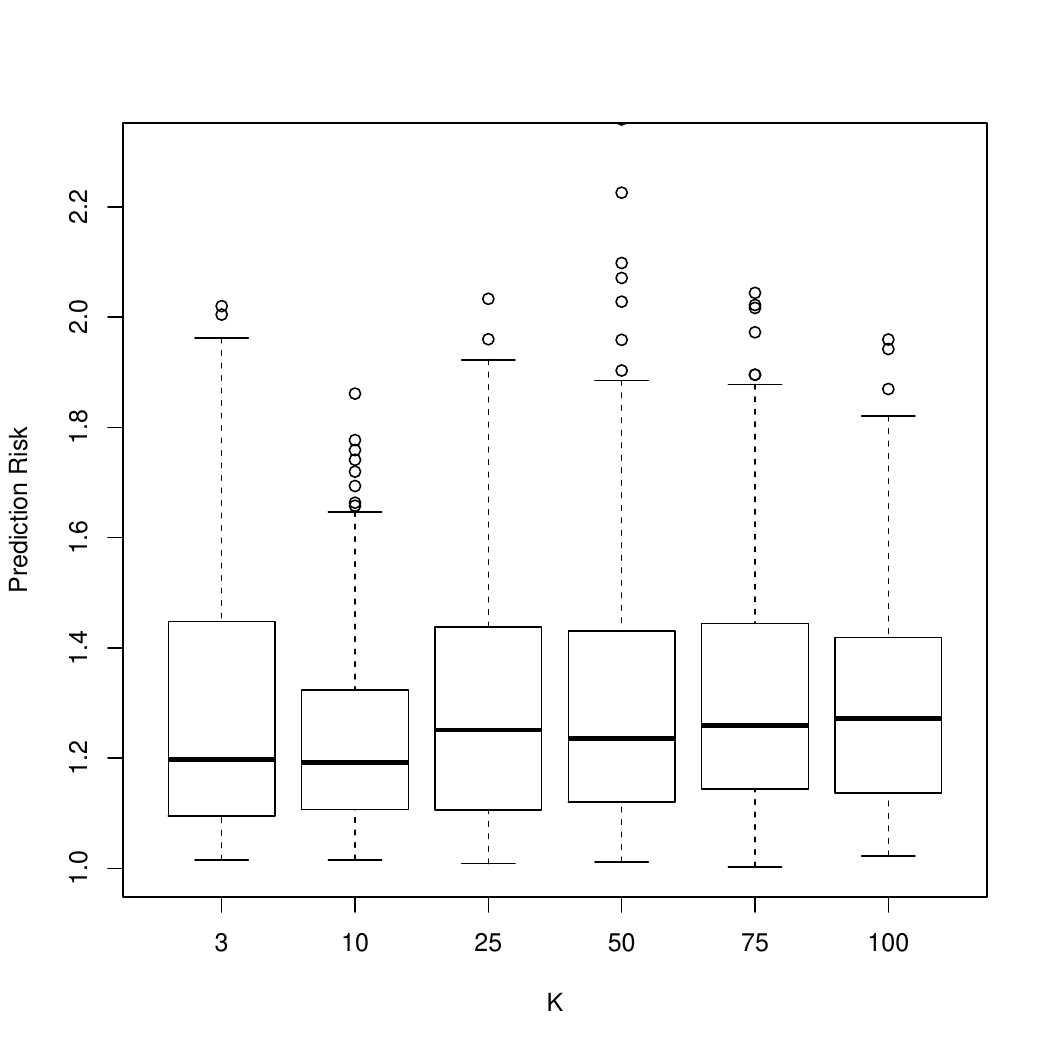}
	\caption*{$p = 1000$, $\alpha = 0.6$}
\end{subfigure}
\caption{\textbf{Prediction risk:}  The other parameters are set at: $\textrm{SNR} = 1$, $\rho= 0.9$.}
\label{fig:predRisk}
\end{figure}

For model selection, there is a more nuanced story.  For sensitivity, which describes how often
we would correctly identify a coefficient as nonzero, larger values of $K$ tend to work better.  For instance, 
in \autoref{fig:sensitivity}, we see that $K = 3$ is often decidedly worse than larger $K$, followed by $K = 10$.
As is often the case, this conclusion presents a trade-off with the
results for specificity (\autoref{fig:specificity}): smaller values of
$K$ tend to work better.  In general,
$\hat\beta(\hat\lambda)$ tends to have more nonzero entries as $K$
increases holding all else constant.  As the correlation parameter
$(\rho)$ or the signal to noise (SNR) increases, all values of $K$ have approximately the same performance.
%\textbf{Notes:} Keep 
%\begin{itemize}
%
%\item Pred risk $p=1000$, SNR = 1, $\rho = 0.9$ (all $\alpha$)
%\item Sensitivity $p=350$, SNR = 1, $\rho = 0.2$, $\alpha = .6$
%\item Sensitivity $p=1000$, SNR = 1, $\rho = 0.2$, $\alpha = .4,.6$
%\item Do same for specificity 
%\end{itemize}

\begin{figure} %This is SNR by Rho, with rho omitting 0.8
\centering
\begin{subfigure}[h]{.32\textwidth}  
	\includegraphics[width=1.5in,trim = 0 15 28 55,clip]
	{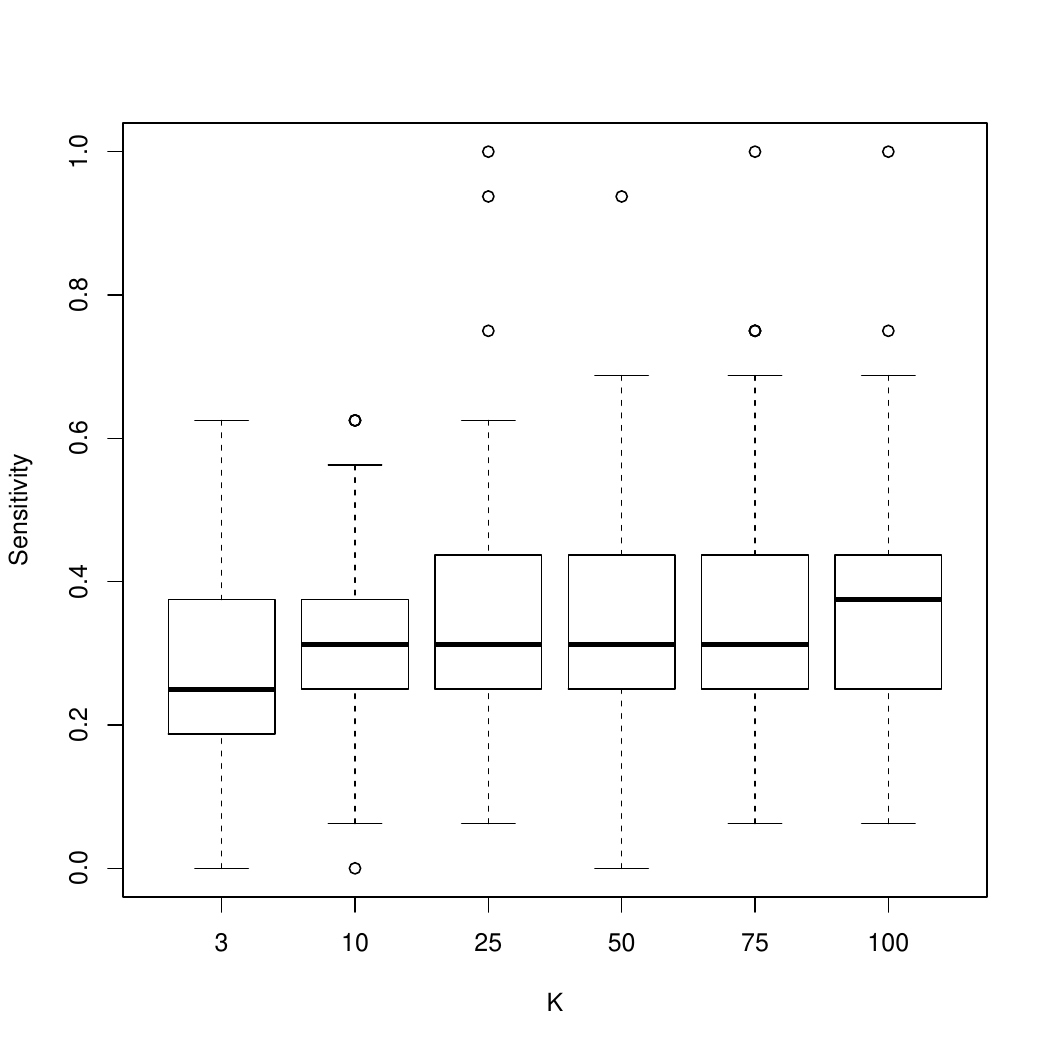}
	\caption*{$p = 350$, $\alpha = 0.6$}
\end{subfigure}
\begin{subfigure}[h]{.32\textwidth}  
	\includegraphics[width=1.5in,trim = 0 15 28 55,clip]
	{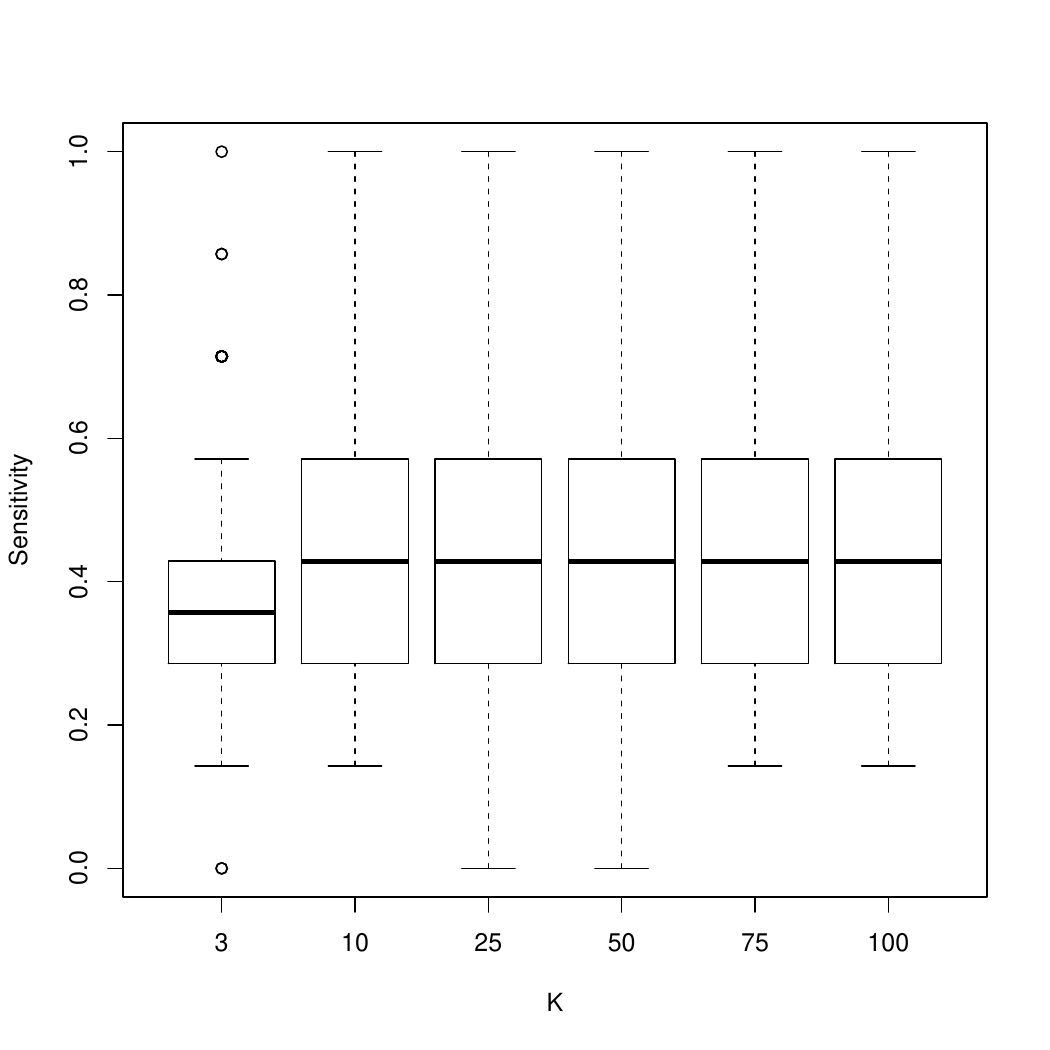}
	\caption*{$p = 1000$, $\alpha = 0.4$}
\end{subfigure}
\begin{subfigure}[h]{.32\textwidth}  
	\includegraphics[width=1.5in,trim = 0 15 28 55,clip]
	{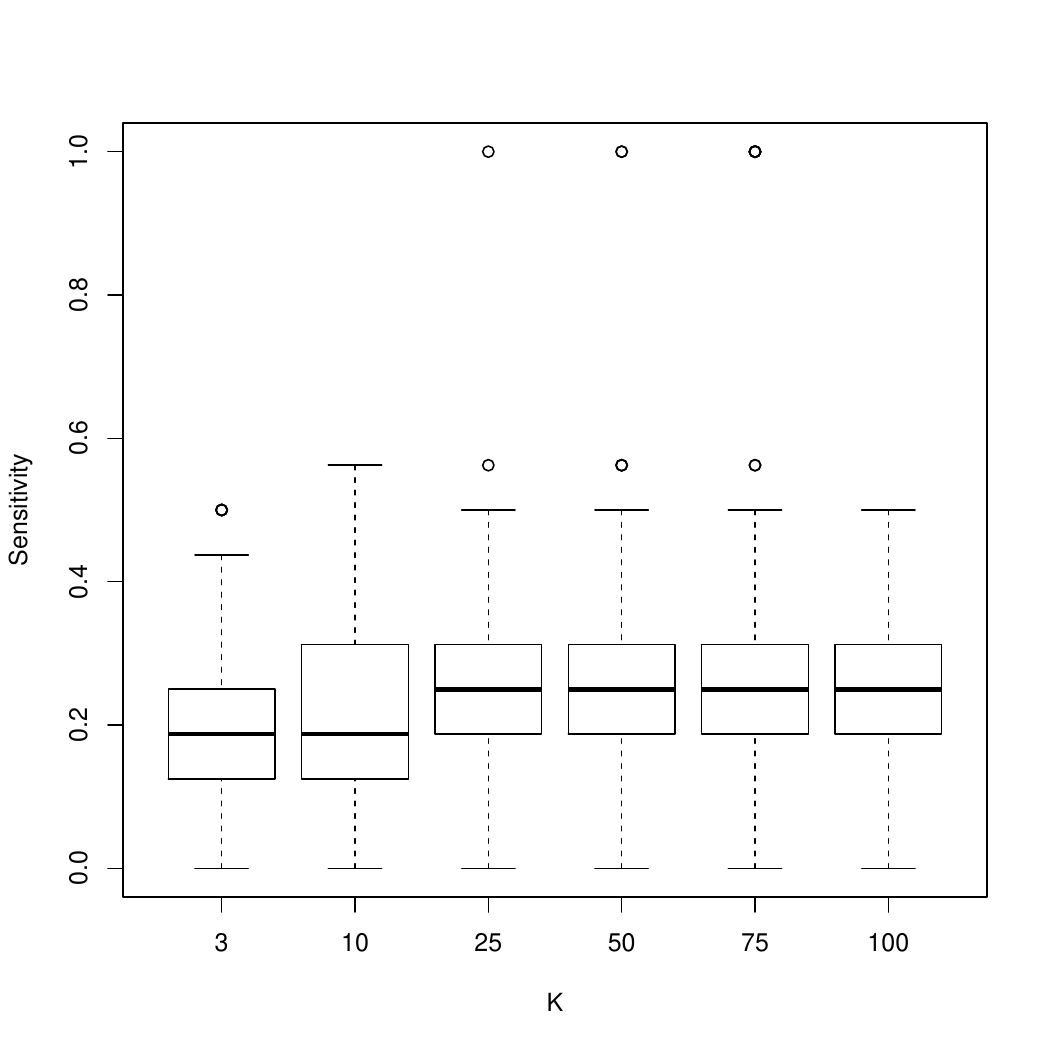}
	\caption*{$p = 1000$, $\alpha = 0.6$}
\end{subfigure}
\caption{\textbf{Sensitivity:} 
 The other parameters are set at: $\textrm{SNR} = 1$, 
$\rho= 0.2$.}
\label{fig:sensitivity}
\end{figure}

\begin{figure} %This is SNR by Rho, with rho omitting 0.8
\centering
\begin{subfigure}[h]{.32\textwidth}  
	\includegraphics[width=1.5in,trim = 0 15 28 55,clip]
	{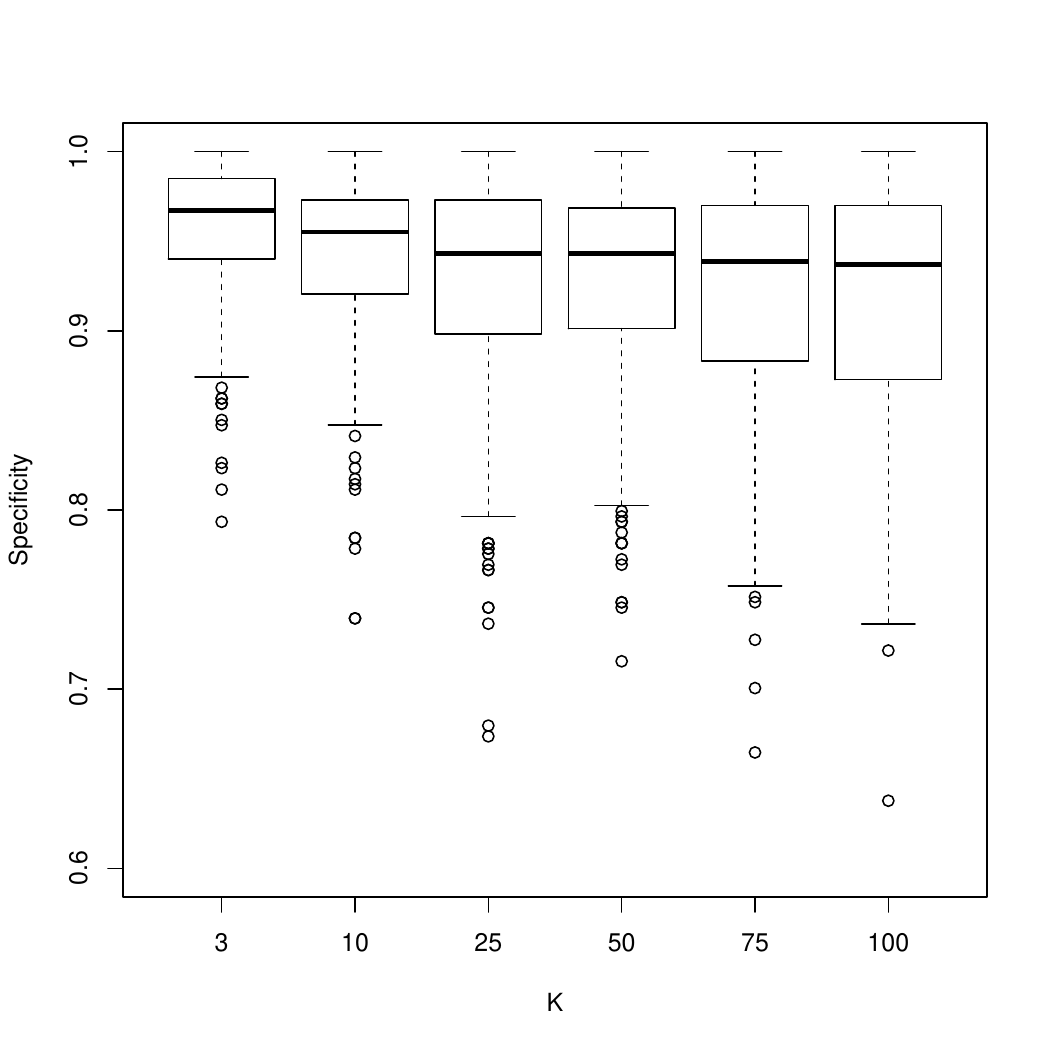}
	\caption*{$p = 350$, $\alpha = 0.6$}
\end{subfigure}
\begin{subfigure}[h]{.32\textwidth}  
	\includegraphics[width=1.5in,trim = 0 15 28 55,clip]
	{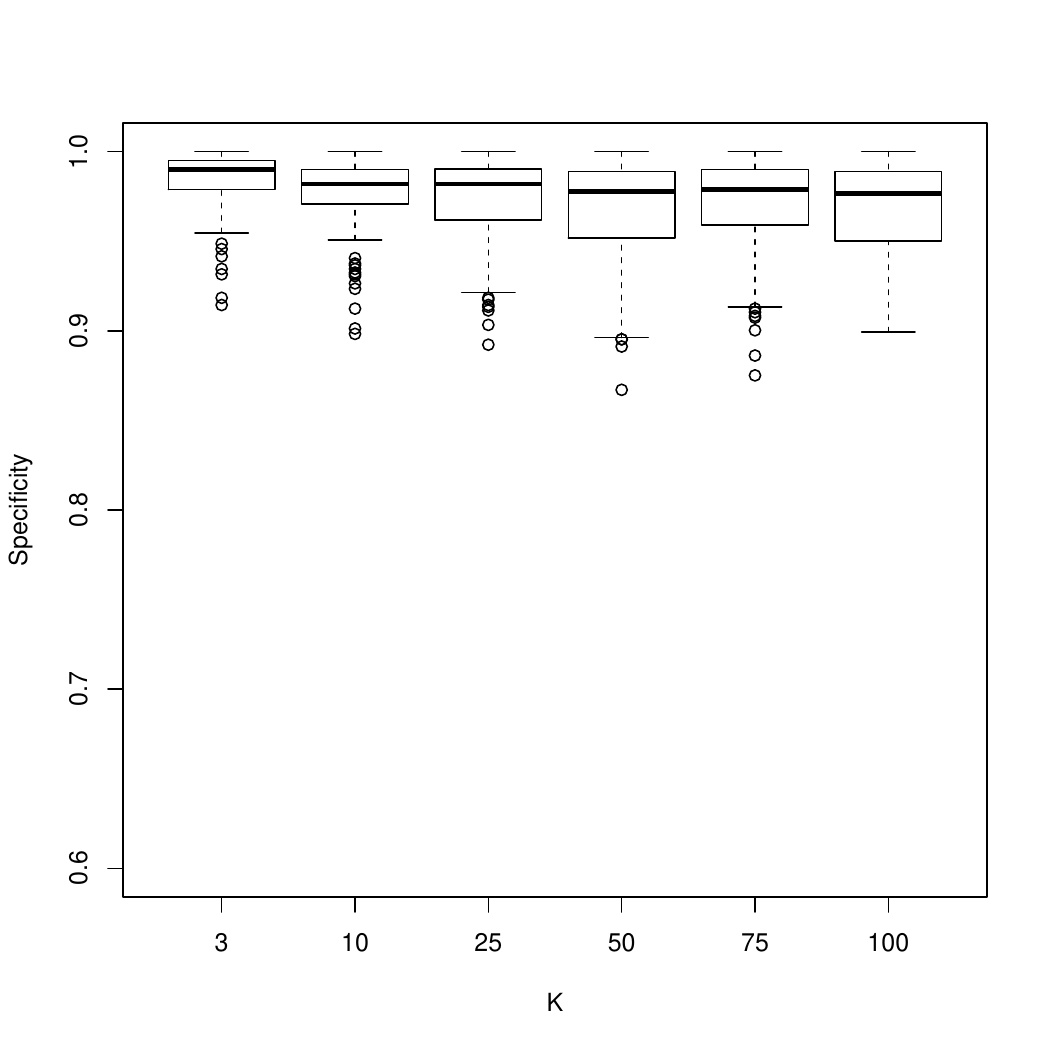}
	\caption*{$p = 1000$, $\alpha = 0.4$}
\end{subfigure}
\begin{subfigure}[h]{.32\textwidth}  
	\includegraphics[width=1.5in,trim = 0 15 28 55,clip]
	{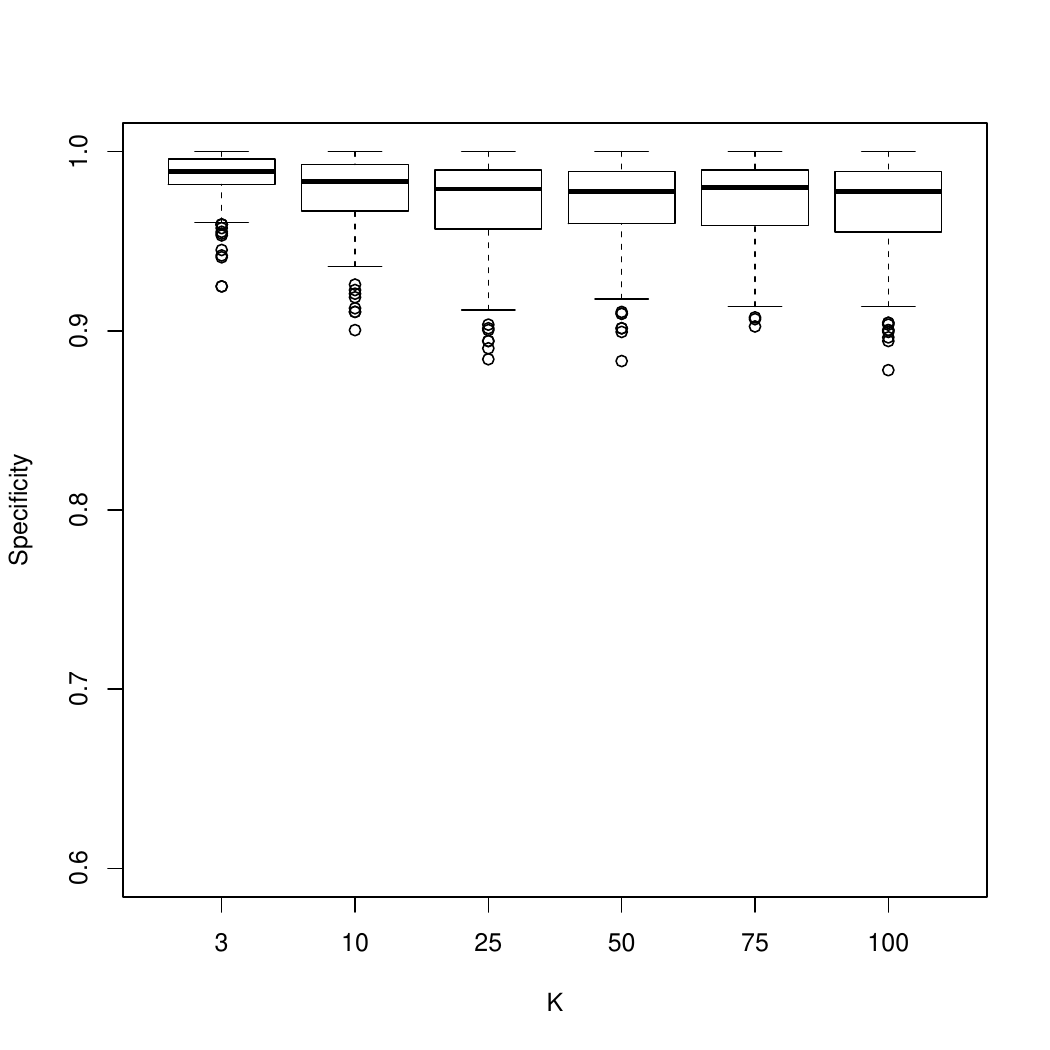}
	\caption*{$p = 1000$, $\alpha = 0.6$}
\end{subfigure}
\caption{\textbf{Specificity:}
 The other parameters are set at: $\textrm{SNR} = 1$, 
$\rho= 0.2$.}
\label{fig:specificity}
\end{figure}

\section{Discussion}
\label{sec:conclusion}

A common practice in data analysis is to attempt to estimate
a coefficient vector $\beta$
via lasso with its regularization parameter chosen via cross-validation.
Unfortunately, no definitive theoretical results 
existed as to the effect of choosing the tuning parameter in this data-dependent
way in the high-dimensional random design setting.

%\textcolor{red}{Remove or keep the next paragraph?}\\
%The main difficulties with our proof technique are handling
%$\cvrisk{V_n}{\S} $ and the choice of
%$T_n$.  Because of the
%complicated dependence between the held-out data and the estimators,
%our results depend on the {\em worst} predictor in some allowable
%set $\Sest$. Much faster rates should be possible if we can deal
%instead with the risk of best estimator directly. At the same time,
%as discussed in \autoref{sec:Tn}, in order to choose the allowable set
%$\Sest$, we must perform a grid search over possible values of
%$t$. Using $\tparmM$ as the upper bound on $T_n$ corresponds to the
%saturated model, but the whole 
%point of using any regularized model, is that the saturated model has
%high variance and low bias. It is our {\em a priori} belief that large
%values of $\tparm$ are unlikely to have good risk properties. Thus,
%imposing conditions on $\tparmM$ is again much stronger than
%desired. With a better understanding of these two issues, one would
%expect that we could achieve the same result in even more general
%settings. 

We have demonstrated that the lasso, group lasso,
and square-root lasso with tuning parameters chosen
by cross-validation are risk consistent relative to the linear oracle
predictor.
% , provided the size of the validation sets, 
% $c_n$, grows with the sample size $n$. 
Under strong conditions on the joint distribution (as in related
literature), our results differ from the standard convergence rates
by a factor of $\log n$. 
Imposing more mild
conditions on the joint distribution of the data, we can achieve the
same rate of convergence to the risk of the linear oracle predictor
with a data-dependent tuning parameter as 
with the optimal, yet inaccessible, oracle tuning parameter. Together,
these results show that if the 
data analyst is interested in using lasso for prediction, choosing the tuning parameter
via cross-validation is still a good procedure. In practice, our results
justify data-driven sets $\Lambda$ and $T$ over which we can safely
select tuning parameters. For $K$-fold CV, when $f^*$ is linear, our
theory suggests taking
$\Lambda=\left[(\log{p}\log{n}/n)^{1/2},\ \infty\right)$ while for $f^*$
arbitrary, a reasonable choice assuming $q=2$ is $T=\left[0,
\norm{Y}_2/(K n^{3/4}(\log{n}\log{p})^{1/2})\right)$.

%
%In our simulation study, CV is convincingly better than the other
%methods at minimizing prediction error. SSR performs marginally 
%better than CV when the following \emph{all} occur: the design
%correlation is low ($\rho = 0.2$), sparsity is high ($\alpha = 0.1$),
%and $p$ is not large---in other words, when the estimation problem is easier. 
%When SNR is very low, CV and AIC have essentially the same
%performance.  In all other cases, CV performs better  
%than all competitors.  As SNR, $\rho$, and $\alpha$ are all unknown in
%practice, using CV is a good choice due to its robustness 
%across these experimental conditions.
%
This work reveals some interesting open questions.  
First, our most general results do not apply for leave-one-out cross-validation as
$c_n =1$ for all $n$ and 
hence the upper-bounds become trivial. Leave-one-out cross-validation
is more computationally 
demanding than $K$-fold cross-validation, but is still used in
practice. These results also do not give any prescription for choosing
$K$ other than that it should be $o(n)$.  Our simulation study indicates that 
all $K$ ranging from 3 to $n$ have approximately the same prediction ability.
For model selection, larger $K$ tends to produce more nonzero coefficients and hence
has better sensitivity but poorer specificity.

As there are many other methods for choosing the tuning parameter in the lasso problem, 
a direct comparison of the behavior of the lasso estimator with
tuning parameter chosen via cross-validation versus a
degrees-of-freedom-based method is of substantial  
interest and should be investigated. Also, our results depend
strongly on the upper bound for $\Gen$ or the lower bound for
$\Lambda$. However, in most cases, we 
will never need to use tuning parameters this extreme. So it makes
sense to attempt to find more subtle theory to provide greater
intuition for the behavior of lasso under cross-validation.

% In this paper, we show that the lasso is risk consistent
% even when the tuning parameter  is
% selected via cross-validation. We feel that this paper provides
% some theoretical justification for the received wisdom that
% cross-validation is a useful tool for the applied researcher in the
% context of the lasso.

\section*{Acknowledgements} 
The authors gratefully acknowledge support from the National Science
Foundation (DH, DMS--1407543; DJM, DMS--1407439) and the Institute of
New Economic Thinking (DH and DJM, INO14-00020).
Additionally, the authors would like the thank Cosma Shalizi and Ryan Tibshirani for
reading preliminary  
versions of this manuscript and also Larry Wasserman for
the inspiration and encouragement. 

\bibliography{lassorefs}

\clearpage

\appendix

\section{Appendix}
\label{sec:proofSection}

To show~\autoref{thm:lasso}, we need a few preliminary
results. We will show how to rewrite the risk as a
quadratic form in \autoref{sec:squared-error-loss}.  In \autoref{sec:background}, we 
state a concentration of measure result and some standard properties
of Orlicz norms. In \autoref{sec:lemmas} 
we prove some useful preliminary lemmas.  Lastly, in \autoref{sec:mainProof} we prove
the main theorem and corollaries.

\subsection{Squared-error loss and quadratic forms}
\label{sec:squared-error-loss}

We can rewrite the various formulas for the risk from
\autoref{sec:notation-assumptions} as quadratic forms.   
Define the parameter to be $\gamma^{\top} := (-1,\beta^{\top})$,
with associated estimator $\hat\gamma_\gen^{\top} := (-1,\hat\beta_\gen^{\top})$.
We can rewrite \refeq{eq:risk-main} as
\begin{equation}
  \label{eq:risk-quad}
  \risk{\beta} = \E_{\mu_n} \left[ (\Yrv -\beta^\top\Xrv)^2\right]=
  \gamma^{\top} \Sn \gamma 
\end{equation} 
where $\Sigma_n := \E_{\mu_n}[ \Zrv \Zrv^{\top}]$.
Analogously, \refeq{eq:empirical-risk-main} has the following form
\begin{equation*}
  \label{eq:empirical-risk-quad} 
  \emprisk{\beta} = \frac{1}{n} \norm{Y- \mathbb{X}\beta}_2^2 =
  \gamma^{\top} \Sign \gamma, 
\end{equation*} 
where $\Sign = n^{-1}\sum_{i=1}^n Z_{i}Z_{i}^{\top}$.
Lastly, we rewrite \refeq{eq:cv-risk-main} as
\begin{equation}
  \label{eq:cv-risk-quad} 
  \cvrisk{V_n}{\gen} =  \frac{1}{K} \sum_{v \in V_n} (\cvgam_\gen)^T
  \cvsig{v} \cvgam_\gen, 
\end{equation} 
where $\cvsig{v}= |v|^{-1} \sum_{r \in v} Z_rZ_r^{\top}$, $\cvgam_\gen
:= (-1,\hat\beta^{(v)}_\gen)^{\top}$, and 
\begin{equation*}
  \label{eq:cv-estimator-quad}
  \hat\beta_\gen^{(v)} := \argmin_{\beta \in \B_\gen} \gamma^{\top} \Signor
  \gamma, 
\end{equation*}
with $\Signor := (n-|v|)^{-1}\sum_{r\notin v} Z_r Z_r^{\top}$.
%$\Signor := (n-1)^{-1}\sum_{i \neq r} Z_{i}Z_{i}^{\top}$.  

% \frac{1}{K} \sum_{v \in V_n}

\subsection{Background Results}
\label{sec:background} 
We use the following results in our proofs.
First is a special case of Nemirovski's inequality.
See 
%\citet*{Nemirovski2000} or 
\citet*{DumbgenVan-De-Geer2010} for more general formulations.
%\begin{lemma}[Nemirovski's inequality]
%  \label{lem:nemirovski}
%  Let $\xi_1,\ldots,\xi_n$ be independent random vectors in $\R^d$,
%  for $d\geq 3$ with $\E[\xi_i]=0$ and
%  $\E\norm{\xi_i}_2^2<\infty$. Then, for every $s \in [2,\infty]$ and index set $v$,
%  there exists an absolute constant $\widetilde{C}$ (independent of
%  $s$, $n$, $d$, validation sets $v$, and the distribution of the $\xi_i$'s) such that
%  \begin{equation*}
%    \E \norm{\sum_{i \in v} \xi_i}_s^2 \leq \widetilde{C}
%    \min\{s,\log d\} \sum_{i\in v} \E \norm{\xi_i}_s^2.
%  \end{equation*}
%\end{lemma}
\begin{lemma}[Nemirovski's inequality]
  \label{lem:nemirovski}
  Let $(\xi_i)_{i \in v}$ be independent random vectors in $\R^d$,
  for $d\geq 3$ with $\E[\xi_i]=0$ and
  $\E\norm{\xi_i}_2^2<\infty$. Then, for any validation set $v$ and distribution for the $\xi_i$'s,
  \begin{align*}
    \E \norm{\sum_{i \in v} \xi_i}_\infty^2 
     & \leq 
    (2e\log d - e) \sum_{i\in v} \E \norm{\xi_i}_\infty^2 
     \leq 
    2e\log d \sum_{i\in v} \E \norm{\xi_i}_\infty^2.
  \end{align*}
\end{lemma}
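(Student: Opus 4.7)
The plan is to use the classical smoothing trick based on $\ell_p$-norms with $p$ chosen to approximate the $\ell_\infty$-norm. The overall architecture has four steps: (i) prove that $\tfrac12\norm{\cdot}_p^2$ is uniformly smooth, (ii) iterate over partial sums to bound $\E\norm{S}_p^2$ by a sum of second moments, (iii) convert between $\norm{\cdot}_p$ and $\norm{\cdot}_\infty$, and (iv) optimize the choice of $p$.

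First I would establish the smoothness lemma: for $p \geq 2$, the function $f(x) = \tfrac12\norm{x}_p^2$ satisfies
\[
f(x+y) \leq f(x) + \langle \nabla f(x), y\rangle + \frac{p-1}{2}\norm{y}_p^2
\]
for all $x,y\in\R^d$. This expresses $(p-1)$-uniform smoothness of $\norm{\cdot}_p^2$ relative to $\norm{\cdot}_p$, a classical result provable either by a direct Hessian computation or via a Clarkson-type inequality for $\ell_p$-spaces. This one analytic ingredient is the main obstacle of the proof, though it is well-known and appears in, e.g., the references cited.

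Second, I would apply this smoothness recursively. Enumerate $v = \{i_1,\ldots,i_m\}$ and let $S_k = \sum_{j=1}^k \xi_{i_j}$. Conditioning on $S_{k-1}$ and using $\E[\xi_{i_k}]=0$ together with independence, the linear cross term vanishes in expectation, so
\[
\E f(S_k) \leq \E f(S_{k-1}) + \frac{p-1}{2}\E\norm{\xi_{i_k}}_p^2.
\]
Telescoping over $k=1,\ldots,m$ yields $\E\norm{S_m}_p^2 \leq (p-1)\sum_{i\in v}\E\norm{\xi_i}_p^2$.

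Third, the elementary norm comparison $\norm{x}_\infty \leq \norm{x}_p \leq d^{1/p}\norm{x}_\infty$ gives
\[
\E\norm{S_m}_\infty^2 \leq \E\norm{S_m}_p^2 \leq (p-1)\,d^{2/p}\sum_{i\in v}\E\norm{\xi_i}_\infty^2.
\]
Finally, choosing $p = 2\log d$ (admissible since $d \geq 3 > e$ forces $p \geq 2$) makes $d^{2/p} = e$, so $(p-1)d^{2/p} = (2\log d - 1)\,e = 2e\log d - e$, which is the stated sharp constant; the looser bound $2e\log d$ follows by discarding the subtracted $e$.
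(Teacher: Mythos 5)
Your proof is correct: the uniform-smoothness inequality for $\tfrac12\norm{\cdot}_p^2$ with constant $p-1$, the telescoping over partial sums using independence and mean zero, the norm comparison $\norm{x}_\infty \leq \norm{x}_p \leq d^{1/p}\norm{x}_\infty$, and the choice $p = 2\log d$ (valid since $d \geq 3 > e$) together yield exactly the constant $(2\log d - 1)e = 2e\log d - e$. The paper does not prove this lemma itself but cites \citet{DumbgenVan-De-Geer2010}, and your argument is precisely the standard smoothing proof given there, so there is nothing to add.
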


Also, we need the following results about the Orlicz norms.
\begin{lemma}[\citealt{vanweak}] 
\label{lem:orlicz}
%Main crux of proof uses this result (these are all from Weak Convergence and Empirical Processes, Chapter 2.2):
For any $\psi_r$-Orlicz norm with $1<r \in \mathbb{N}$ and sequence of $\R$-valued random variables 
$(\zeta_j)_{j=1,\ldots,m}$
\[
\norm{\max_{1\leq j \leq m}  \zeta_j}_{\psi_r} \leq \Psi \log^{1/r}(m + 1) \max_{1\leq j \leq m} \norm{ \zeta_j}_{\psi_r},
\]
where $\Psi$ is a constant that depends only on $\psi_r$.  
%This implies that 
%\[
%||X||_2^2 \leq 4(\log 2)^{-1 } ||X||_{\psi_2}^2.
%\]
\end{lemma}

\begin{lemma}[Corollary 5.17 in \citealt{Vershynin2012}]
  \label{lem:bernstein}
  Let $\xi_1,\ldots,\xi_n$ be iid centered random
  variables and let $\norm{\xi_i}_{\psi_1}\leq \kappa$. Then for every
  $\delta>0$,
  \[
  \P\left(\left|\sum_{i=1}^n \xi_i\right| \geq n\delta\right) \leq
  2\exp\left(-\cConst n
    \min\left\{\frac{\delta^2}{\kappa^2},\frac{\delta}{\kappa}\right\}\right), 
  \]
  where $\cConst=\cConstVal$.
\end{lemma}

\subsection{Supporting Lemmas}
\label{sec:lemmas}
Several times in our proof of the main results we
need to bound a quadratic form given by a symmetric matrix and an
estimator indexed by a tuning parameter.  To this end, we state the following lemma.
\begin{lemma}
  \label{lem:quad-form} 
  Suppose $a \in \mathbb{R}^p$ and $A \in
  \mathbb{R}^{p \times p}$.  Then
  \begin{equation*} 
    a^\top A a 
    \leq \norm{a}^2_{1}
    \norm{A}_{\infty},
  \end{equation*}
  where $\norm{A}_\infty:=\max_{i,j} |A_{ij}|$ is the entry-wise max norm.
\end{lemma}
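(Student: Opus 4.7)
The plan is to write the quadratic form as a double sum over indices, bound each entry of $A$ by its entry-wise max, and recognize that the resulting double sum over $|a_i||a_j|$ factors into $\|a\|_1^2$. This is essentially a one-line Hölder-type bound, so there is no serious obstacle; the main point is to be careful with absolute values since $A$ and $a$ need not be nonnegative.

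Concretely, I would first expand
\begin{equation*}
a^\top A a \;=\; \sum_{i=1}^{p}\sum_{j=1}^{p} a_i\, A_{ij}\, a_j,
\end{equation*}
apply the triangle inequality to get
\begin{equation*}
|a^\top A a| \;\leq\; \sum_{i,j} |a_i|\,|A_{ij}|\,|a_j|,
\end{equation*}
and then use $|A_{ij}| \leq \max_{k,\ell}|A_{k\ell}| = \|A\|_\infty$ to pull the constant outside the double sum. The remaining factor is
\begin{equation*}
\sum_{i,j} |a_i|\,|a_j| \;=\; \Bigl(\sum_i |a_i|\Bigr)\Bigl(\sum_j |a_j|\Bigr) \;=\; \|a\|_1^2,
\end{equation*}
which yields the stated bound $a^\top A a \leq \|a\|_1^2 \|A\|_\infty$. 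Since the left-hand side of the conclusion is not in absolute values but the bound on $|a^\top A a|$ is stronger, this suffices.

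The only thing worth flagging is that the lemma as stated does not require $A$ to be positive semidefinite, so $a^\top A a$ may be negative, in which case the inequality is trivial; the nontrivial case is when $a^\top A a \geq 0$, which is handled by the argument above via $a^\top A a \leq |a^\top A a|$. No further machinery (e.g.\ eigenvalue bounds, Cauchy--Schwarz in the operator-norm sense) is needed, which is in fact the point of the lemma: it trades a tighter operator-norm bound for a quantity ($\|A\|_\infty$) that will be much easier to control via concentration for the random matrices $\Sign - \Sigma_n$ that appear in Section~\ref{sec:squared-error-loss}.
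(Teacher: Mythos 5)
Your proof is correct and is the standard one-line argument: expand $a^\top A a = \sum_{i,j} a_i A_{ij} a_j$, bound by $\|A\|_\infty \sum_{i,j}|a_i||a_j| = \|A\|_\infty \|a\|_1^2$. The paper states this lemma without proof, treating it as immediate, and your argument (including the correct observation that the bound on $|a^\top A a|$ is stronger than what is claimed) is exactly the intended justification.
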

% \begin{proof} (\autoref{lem:quad-form})
%   \begin{align*}
%     a^\top A a & \leq \norm{a}_{1} \norm {A a}_{\infty} 
%      \leq \norm{a}_{1} \max_{ij} |A_{ij}| \norm{a}_1  
%      = \norm{a}^2_{1} \norm{A}_{\infty},
%   \end{align*} where the first inequality follows by H{\"o}lder's
%   inequality.
% \end{proof}

We use \autoref{lem:nemirovski} to find the rate of
convergence for the sample covariance matrix to the population
covariance. 

\begin{lemma}
  \label{lem:leaveKout}
Let $v \subseteq \{1,2,\ldots,n\}$ be an index set and let $|v|$ be its number of elements. If $\mu \in \F_q$,
then there exists a constant 
$C$, depending only on $q$, such that
  \begin{equation*}
\E_\mu\norm{\cvsig{v} - \Sigma_n}_\infty 
\leq 
C\sqrt{\frac{(\log p)^{1 + 2/q} }{|v|}},
  \end{equation*}
  where it is understood that $2/\infty = 0$.
\end{lemma}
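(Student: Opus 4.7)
The plan is to treat $\cvsig{v} - \Sigma_n$ entry-wise as an empirical average of mean-zero random matrices and apply the vector-valued maximal inequality of Lemma~\ref{lem:nemirovski}. Define the centered, i.i.d.\ random matrices $\xi_r := Z_r Z_r^\top - \Sigma_n$ for $r \in v$, and identify each $\xi_r$ with a vector in $\mathbb{R}^d$, where $d := (p+1)^2 \geq 3$, via vectorization. Since the entry-wise max norm of a matrix agrees with the infinity norm of its vectorization, Jensen's inequality followed by Nemirovski's inequality gives
\[
\E_\mu \|\cvsig{v} - \Sigma_n\|_\infty \;\leq\; \frac{1}{|v|}\left(\E\Big\|\sum_{r \in v}\xi_r\Big\|_\infty^2\right)^{1/2} \;\leq\; \sqrt{\frac{2e\log d}{|v|^2}\sum_{r\in v}\E\|\xi_r\|_\infty^2}.
\]
Since $\log d = 2\log(p+1) \lesssim \log p$, the remaining task is to bound $\E\|\xi_r\|_\infty^2$ uniformly in $r$ and $n$ by a constant multiple of $(\log p)^{2/q}$.

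Next, I would bound $\E\|\xi_r\|_\infty^2$ using the Orlicz-norm machinery of Lemma~\ref{lem:orlicz}. Applying the maximal inequality to the $d$ scalar entries $Z_{r,j}Z_{r,k} - \E Z_{r,j}Z_{r,k}$, each of which has $\psi_q$-Orlicz norm at most $C_\F$ by the assumption $\mu \in \F_q$, gives
\[
\big\|\|\xi_r\|_\infty\big\|_{\psi_q} \;\leq\; C_q \log^{1/q}(d+1) \cdot \max_{0\leq j,k\leq p}\|Z_{r,j}Z_{r,k} - \E Z_{r,j}Z_{r,k}\|_{\psi_q} \;\leq\; C'(\log p)^{1/q}.
\]
Translating this Orlicz bound into an $L^2$ bound via the tail estimate $\P(|W| > x) \leq C_1 e^{-c_2 x^q}$ from \refeq{eq:expTailBound} and integrating $\E W^2 = \int_0^\infty 2x\,\P(|W| > x)\,dx$ yields $\E\|\xi_r\|_\infty^2 \leq C''(\log p)^{2/q}$ with a constant depending only on $q$ and $C_\F$.

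Substituting back into the Nemirovski bound of the first step yields
\[
\E_\mu\|\cvsig{v} - \Sigma_n\|_\infty \;\leq\; C\sqrt{\frac{\log(p+1)\cdot(\log p)^{2/q}}{|v|}} \;\leq\; C'''\sqrt{\frac{(\log p)^{1+2/q}}{|v|}},
\]
which is the advertised bound. The step requiring the most care is the passage from the $\psi_q$-Orlicz bound on $\|\xi_r\|_\infty$ to its second moment when $q<2$: one cannot invoke the sub-Gaussian shortcut $\E W^2 \leq 2\|W\|_{\psi_2}^2$, and must instead integrate the stretched-exponential tail directly, tracking the resulting $q$-dependent constant. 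The boundary case $q=\infty$ is handled separately by exploiting the almost-sure boundedness of the $X_j$ (together with the $L^2$ assumption on $Y$), which controls $\E\|\xi_r\|_\infty^2$ by a constant independent of $p$; with the convention $2/\infty = 0$ this recovers the announced $\sqrt{\log p/|v|}$ rate.
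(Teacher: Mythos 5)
Your proposal is correct and follows essentially the same route as the paper's proof: vectorize the centered cross-products, apply Jensen and Nemirovski's inequality with $d=(p+1)^2$, and control $\E\norm{\xi_r}_\infty^2$ via the Orlicz maximal inequality of \autoref{lem:orlicz}. The only cosmetic difference is that the paper converts the $\psi_q$ bound into an $L^2$ bound by chaining $\norm{\cdot}_{2}\leq 2\norm{\cdot}_{\psi_1}\leq 2(\log 2)^{1/q-1}\norm{\cdot}_{\psi_q}$ rather than integrating the stretched-exponential tail directly, and both yield the same $(\log p)^{2/q}$ factor.
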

\begin{proof}(\autoref{lem:leaveKout})
  Let $\xi_r \in \mathbb{R}^{(p+1)^2}$ be the vectorized version of the zero-mean matrix 
  $\frac{1}{|v|}( Z_{r} Z_{r}^{\top} - \E \Zrv \Zrv^{\top} )$.  
  Then, by Jensen's inequality
  \begin{equation*}
    \left(\E\norm{\hat\Sigma_v - \Sigma_n}_\infty \right)^2
    \leq
    \E\norm{\hat\Sigma_v - \Sigma_n}_\infty^2
    = 
    \E\norm{\sum_{r\in v}  \xi_r}_{\infty}^2.
  \end{equation*}
  Using \autoref{lem:nemirovski} with $d = (p+1)^2$ and writing $\norm{X}_{L_2(\mu)}^2 := \E_\mu X^2$ we find
  \begin{align*}
\E_\mu \norm{\sum_{r\in v} \xi_r}_{\infty}^2 
    & \leq  2e\log\left( (p+1)^2\right) 
    \sum_{r \in v}  \bigg|\bigg|\norm{ \xi_r}_{\infty} \bigg|\bigg|_{L_2(\mu)}^2 \\
    & \leq  4e\log (p+1)
    \sum_{r \in v}  \left( \left(2(\log
          2)^{1/q-1}\right)\bigg|\bigg|\norm{ \xi_r}_{\infty}
      \bigg|\bigg|_{\psi_q} \right)^2 \\     
    & \lesssim \log (p+1)
    \sum_{r \in v}  \left( \log^{1/q}\left((p+1)^2 + 1\right)
        \bigg|\bigg|\norm{ \xi_r}_{\psi_q}\bigg|\bigg|_{\infty}
      \right)^2 \\     
    & \lesssim \log(p+1) \frac{1}{|v|^2}
    \sum_{r \in v}  \left( \log^{1/q}\left((p+1)^2 + 1\right) \Cf \right)^2 \\        
    & \leq  C'\log (p+1) \frac{1}{|v|^2}
    \sum_{r \in v}   \log^{2/q}\left((p+1)^2 + 1\right) \\        
    & \leq C\frac{(\log p)^{1 + 2/q} }{|v|}. \\        
  \end{align*}
Note that $\psi_q$ is the Orlicz norm induced by the measure $\mu_n$ and
the third inequality follows by 
\autoref{lem:orlicz}. 
\end{proof}

% \begin{lemma}
%   For any $\psi_r$ norm ($r\geq1$) , if $\norm{\xi}_{\psi_r}\leq \kappa<\infty$, then
%   $\norm{\xi-\E[\xi]}_{\psi_r} \leq 2\kappa$.
% \end{lemma}
% \begin{proof}
%   \begin{align}
%     \norm{\xi-\E[\xi]}_{\psi_r} &\leq
%                                   \norm{\xi}_{\psi_r}+\norm{\E[\xi]}_{\psi_r}\\
%     \norm{\E[\xi]}_{\psi_r} &= |\E[\xi]| \leq \E[|\xi|] \leq \norm{\xi}_{\psi_r}.
%   \end{align}
% \end{proof}

% \begin{lemma}
% If $\norm{f^*(X_i)}_{\psi_2}\leq F$ and
% $\norm{\epsilon_i}_{\psi_2}\leq \kappa$, then $\norm{Y_i^2}_{\psi_1}
% \leq 2(F + \kappa)^2$.
% \end{lemma}
% \begin{proof}
%   As $\psi_2$ is a norm, by the triangle inequality we have that 
%   \[
%   \norm{Y}_{\psi_2} = \norm{f^* + \epsilon}_{\psi_2} \leq
%   \norm{f^*}_{\psi_2} + \norm{\epsilon}_{\psi_2} \leq F + \kappa.
%   \]
%   Then, by the definition of $\psi_r$, we have $\norm{Y_i^2}_{\psi_1}
%   \leq 2(F + \kappa)^2$.
% \end{proof}

\begin{corollary}
  \label{cor:subExpY}
  By the definition of $\mu_n$,
  \[
  \P\left(\left|\norm{Y}_2^2-n\E [Y_1^2]\right| \geq n\delta\right)
  \leq 2\exp\left(-\cConst n
    \min\left\{\frac{\delta^2}{\Cf^{'2}},
      \frac{\delta}{\Cf'}\right\}\right),
  \]
  where $c=\cConstVal$ is an absolute constant and $\Cf'=(\log 2)^{1/q-1} \Cf$.
\end{corollary}
\begin{proof}
  This result follows immediately from \autoref{lem:bernstein} and the
  result \[|| \xi||_{\psi_1} \leq (\log 2)^{1/q-1 } ||
  \xi||_{\psi_q}\leq (\log 2)^{1/q-1 }\Cf = \Cf'.\]\end{proof}

\subsection{Proof of Main Results}
\label{sec:mainProof}
\begin{proof}[\autoref{thm:lasso}]

% Using the sets $D_n$ and $E_n$ defined in Lemmas
% \ref{lem:tmaxUpperBound} and  \ref{lem:tmaxLowerBound}, respectively,
% we can write the excess risk as 
Let $D_n,E_n$ be any two sets. 
Then we can make the following decomposition:
\begin{align}
\P\left( \mathcal{E}( \hgen,\ngen )  \geq \delta \right) 
& = 
\P\left( \mathcal{E}( \hgen,\ngen )  \geq \delta \cap D_n \cap E_n\right) + 
\P\left( \mathcal{E}( \hgen,\ngen )  \geq \delta \cap D_n^c \cap E_n\right) + \notag \\
& \quad + 
\P\left( \mathcal{E}( \hgen,\ngen )  \geq \delta \cap D_n \cap E_n^c\right) + 
\P\left( \mathcal{E}( \hgen,\ngen )  \geq \delta \cap D_n^c \cap E_n^c\right) \notag \\
& \leq
\P\left( \mathcal{E}( \hgen,\ngen )  \geq \delta \cap D_n \cap E_n\right) + 
2\P\left(D_n^c \right) + \P\left(E_n^c \right).
\label{eq:DnEn}
\end{align}
Also,
\begin{align} 
  \mathcal{E}( \hgen,\ngen ) 
  &= 
  \risk{\hat\beta_{\hgen}} - \risk{\beta_{\ngen}} \notag\\
  & = 
  \underbrace{\risk{\betat{\hgen}} -\cvrisk{V_n}{ \hgen}}_{(I)}+
  \underbrace{\cvrisk{V_n}{\hgen} -  \cvrisk{V_n}{\mgen}}_{(II)} + \notag\\
  & \qquad 
   + \underbrace{\cvrisk{V_n}{\mgen} - \emprisk{\betat{\ngen}}}_{(III)}
   + \underbrace{\emprisk{\betat{\ngen}} -
     \risk{\beta_{\ngen}}}_{(IV)}, \label{eq:decomposition} 
\end{align} 
where we use the notation $\betat{\gen} = \hat\beta(\B_{\gen})$.
Now, for any $\gen \in T_n$,
$
  \cvrisk{V_n}{\hgen} -  \cvrisk{V_n}{\gen}\leq 0,
$
by the definition of $\hgen$, and thus $(II) < 0$.

Let
$
D_n := \left\{ \tparm_{\max}  \leq 2n\Cf'/a_n \right\} $
and $  
E_n := \{ \tparm_{\max} \geq t_n  \}$.
On the set $E_n$, $(III) \leq \cvrisk{V_n}{\mgen} - \emprisk{\betat{\mgen}} =: \widetilde{(III)}$.
Taking the first term from \refeq{eq:DnEn} and combining it with the
decomposition in \refeq{eq:decomposition}, we 
see that 
\begin{align}
\P\left( \mathcal{E}( \hgen,\ngen )  \geq \delta \cap D_n \cap E_n\right)  
& \leq 
\P\bigg((I)  \geq \delta/3 \cap D_n\bigg)  
+ \P\bigg(\widetilde{(III)}  \geq \delta/3 \cap D_n \cap E_n\bigg)
  \notag\\&\quad +\P\bigg((IV) \geq \delta/3 \bigg) 
\label{eq:firstTerm}
\end{align}
We break the remainder of
this section into parts based on these terms.
% We will prove the result for the
% lasso first. The proof of \autoref{thm:lassoType} follows.

\paragraph{Final predictor and cross-validation risk $(I)$:}
\label{sec:final-pred-cross}

% The first piece is the difference between the loss of the final predictor $\risk{\betatcv}$
% and the cross-validation risk $\cvrisk{\, \tCV \;}$.  

Using the notation introduced in \autoref{sec:squared-error-loss}, 
note that by equations \eqref{eq:risk-quad} and \eqref{eq:cv-risk-quad}
\begin{align*}
  \risk{\betat{\hgen}} -\cvrisk{V_n}{\, \hgen}
  & = 
  \hatgamtT\, \Sigma_n\, \hatgamt 
  -\frac{1}{K}\sum_{v\in V_n} \loogamtT \cvsig{v} \loogamt \notag\\
  % \label{eq:star}
  & =
  \left[\hatgamtT\, \Sigma_n \, \hatgamt -
    \hatgamtT\left( \Sign \right) \hatgamt
  \right]  +%+ \\
  % \label{eq:starstar} 
  %& \qquad 
  \left[\hatgamtT\left( \Sign \right) \hatgamt-  
    \frac{1}{K}\sum_{v\in V_n} \loogamtT \cvsig{v} \loogamt \right]
\end{align*}

Addressing each of the terms in order,
\begin{align} 
  % \eqref{eq:star} cvsig{v}
  \left[\hatgamtT\, \Sigma_n \, \hatgamt -
    \hatgamtT\left(  \Sign \right) \hatgamt
  \right]
  &= 
  \hatgamtT\left( \Sigma_n - \Sign \right) \hatgamt %\notag \\
  \leq
  \norm{\hatgamt}_1^2 \norm{ \Sigma_n - \Sign}_\infty, \notag
  % & \leq 
  % \mnorm{\Sigma - \Sign}_{\infty}
  % \sup_{\tparm \in \Tparm} (1 + t)^2  \\ 
  % & \leq
  % \mnorm{\Sigma - \Sign}_{\infty} (1+ \tparm_{\max})^2. 
   \label{eq:IaBound}
\end{align}
where the inequality follows by \autoref{lem:quad-form}.
%  and the last two inequalities follow by the 
% definition of $\B_t$ and $T_n$, respectively.

Likewise,
\begin{align} 
  \lefteqn{\left[\hatgamtT\left( \Sign \right) \hatgamt- 
      \frac{1}{K}\sum_{v\in V_n} \loogamtT \cvsig{v} \loogamt \right]}\notag\\
  &= \left( \hatgamtT \Sign \hatgamt -  
    \frac{1}{K}\sum_{v\in V_n}\loogamtT \Sign \loogamt\right)%+\notag\\
  %&\quad
  + \left(  \frac{1}{K}\sum_{v\in V_n} \loogamtT \Sign\loogamt 
    -\frac{1}{K}\sum_{v\in V_n} \loogamtT \cvsig{v} \loogamt  \right) \notag\\
  &=  \frac{1}{K}\sum_{v\in V_n} \left( \hatgamtT
    \Sign \hatgamt - \loogamtT \Sign \loogamt\right)
  + \left(  \frac{1}{K}\sum_{v\in V_n}
    \loogamtT \left( \Sign-\cvsig{v}\right) \loogamt\right) \notag\\ 
  &\leq
  \frac{1}{K}\sum_{v\in V_n}\loogamtT \left( \Sign-\cvsig{v}\right) \loogamt.\notag
\end{align} 
The last inequality follows as $\hatgamt$ is chosen to
minimize $\hatgamtT \Sign \hatgamt$, and so for any $v \in V_n$,
\begin{equation} 
  \label{eq:empmin1}
  \hatgamtT \Sign \hatgamt \leq \loogamtT \Sign \loogamt.
\end{equation}

Continuing and using  \autoref{lem:quad-form},
\begin{align} 
  %\lefteqn{ 
    \frac{1}{K}\sum_{v\in V_n} \loogamtT \left(
      \Sign-\cvsig{v}\right) \loogamt%} \notag \\ 
  &  \leq  \frac{1}{K}\sum_{v\in V_n}   \norm{\loogamt}_1^2 \norm{ \Sign - \cvsig{v} }_\infty \notag \\ 
  &   \leq \frac{1}{K}\sum_{v\in V_n}  \norm{\loogamt}_1^2 \left(
    \norm{\hat{\Sigma}_n - \Sigma_n}_\infty + 
    \norm{\Sigma_n - \cvsig{v}}_\infty \right) \notag
       \label{eq:IbBound}
%  & = \sup_{\gen \in \Genn} \sup_{\beta \in \S_\gen} \norm{\gamma}_1^2
%  \left(\norm{\Sigma_n - \Sign}_{\infty} + 
%    \frac{1}{K_n}\sum_{v\in V_n} \norm{\Sigma_n - \cvsig{v}}_\infty \right)
\end{align}
Therefore,
\begin{align*} 
 (I)
  &\leq 
  \norm{\hat{\gamma}_{\tCV}}_1^2
 \norm{ \Sigma_n - \Sign}_\infty +   \frac{1}{K}\sum_{v\in V_n}\left(
   \hat \gamma_{\tCV}^{(v)}\right)^{\top} \left(
   \Sign-\cvsig{v}\right) \hat \gamma_{\tCV}^{(v)} \\
  &\leq
   (1+\tmax)^2  \bigg( 2 \norm{\Sigma_n -
      \Sign}_{\infty}  +  \frac{1}{K}\sum_{v\in V_n} \norm{\Sigma_n -
      \cvsig{v}}_\infty \bigg).
\end{align*}

By \autoref{lem:leaveKout} with $V_n = \{\{1,\ldots,n\}\}$ and $c_n = n$,
\[
  \E\norm{\Sigma_n-\Sign}_\infty \leq C_1\sqrt{\frac{(\log p)^{1 + 2/q} }{n}},
\]
while taking $V_n = \{v_1,\ldots, v_{K}\}$ shows,
\begin{equation*}
  \frac{1}{K}\sum_{v\in V_n} \E \norm{\Sigma_n - \cvsig{v}}_\infty 
  \leq 
  C_2\sqrt{\frac{(\log p)^{1 + 2/q} }{c_n}}.
\end{equation*}
% Furthermore, $\E\left[(1+t_{\max})^4\right] \asymp \E\left[(t_{\max})^4\right]$
% implies $\E\left[(1+t_{\max})^4\right]  \asymp (t_n^4)$.
Combining these two bounds together gives
%along Slutsky's theorem gives
%\begin{align} 
%(I) &= o_\P\left(\tmaxrate\right)\left( O_\P \left(\sqrt{\frac{\log
%      p}{n}}\right) + O_\P \left(\sqrt{\frac{\log
%      p}{c_n}}\right)\right) = o_\P\left( \tmaxrate \sqrt{\frac{\log
%      p}{c_n}}\right).
%\label{eq:IconclusionLasso}
%\end{align}
$$
\P\bigg((I)  \geq \frac{\delta}{3} \cap D_n\bigg)  
\leq 
\frac{3}{\delta}\E[(I) \, \mathbf{1}_{D_n} ] \notag\\
$$
\begin{equation}
\leq
\frac{3}{\delta}
\left(1 + \frac{2n(\log 2)^{1/q-1} C_q)}{a_n}\right)^2 
\bigg( 2C_1 \sqrt{\frac{(\log p)^{1 + 2/q}}{n}}  +  C_2  \sqrt{\frac{(\log p)^{1 + 2/q}}{c_n}}\bigg).
\label{eq:I}
\end{equation}

%%%%%%%%%%%%%%%%%%%%%%%%%%%%% 
\paragraph{Cross-validation risk and empirical risk $(III)$:}
%%%%%%%%%%%%%%%%%%%%%%%%%%%%% 
\label{sec:cross-valid-risk}
Due to the discussion following \refeq{eq:decomposition}, it is
sufficient to bound $\widetilde{(III)}$ instead. 
Recall that  
$\Signor = (n-c_n)^{-1}\sum_{r\notin v} Z_r Z_r^{\top}.$

Then,
\begin{align}
    \cvrisk{V_n}{\mgen} - \emprisk{\betat{\mgen}}
  &= \frac{1}{K}\sum_{v\in V_n}  \loogamT{\mgen} \cvsig{v} \loogam{\mgen} -
  \hatgamT{\mgen} \Sign \hatgam{\mgen} \notag \\
  &= \frac{1}{K}\sum_{v\in V_n}  \left( \loogamT{\mgen} \cvsig{v} \loogam{\mgen} - \loogamT{\mgen} \Signor
    \loogam{\mgen} \right)+ \notag \\
  &\qquad +
\frac{1}{K}\sum_{v\in V_n} \left( \loogamT{\mgen} \Signor \loogam{\mgen} -
    \hatgamT{\mgen} \Sign \hatgam{\mgen} \right) \notag \\
  & \leq \frac{1}{K}\sum_{v \in V_n} 
  \norm{\loogam{\mgen}}_1^2 \norm{\cvsig{v}-\Signor}_\infty, \notag
   \label{eq:IIIaBound}  
\end{align}
where the inequality follows by \autoref{lem:quad-form} and the fact that
$\loogam{\sgen}$ is chosen
to minimize $\loogamT{\sgen} \Signor \loogam{\sgen}$, which implies
\begin{equation}
  \label{eq:empmin2}
  \loogamT{\mgen} \Signor \loogam{\mgen}
  \leq \hatgamT{\mgen} \Signor \hatgam{\mgen}.
\end{equation}
As before, 
\[
\frac{1}{K}\sum_{v \in V_n} 
  \norm{\loogam{\mgen}}_1^2 \norm{\cvsig{v}-\Signor}_\infty  \leq
  (1+\tmax)^2  \frac{1}{K}\sum_{v \in V_n} \left( \norm{\Sigma_n - 
  \cvsig{v}}_{\infty}  +  \norm{\Sigma_n -
  \cvsig{(v)}}_\infty \right).
\]
We can use a
straight-forward adaptation of \autoref{lem:leaveKout} 
to show that
\[
\frac{1}{K}\sum_{v\in V_n} \E\norm{\Sigma_n-\Signor}_\infty 
\leq
C_3\sqrt{\frac{(\log p)^{1 + 2/q}}{n - c_n}}.
\]
Therefore, 
$$
\P\bigg(\widetilde{(III)}  \geq \delta/3 \cap D_n \cap E_n\bigg) 
 \leq 
\frac{3}{\delta}\E[\widetilde{(III)} \, \mathbf{1}_{D_n} ]
$$
\begin{equation}
\leq 
\frac{3}{\delta}
\left(1 + \frac{2n\Cf'}{ a_n}\right)^2 \left(C_2
\sqrt{\frac{(\log p)^{1 + 2/q}}{c_n}} + C_3 \sqrt{\frac{(\log p)^{1 +
        2/q}}{n - c_n}}\right). 
\label{eq:III}
\end{equation}
\paragraph{Empirical risk and expected risk $(IV)$} 
% \label{sec:empir-risk-expect}
The proof of these results is given by \citet*{GreenshteinRitov2004}. We
include a somewhat different proof for completeness.
Observe 
%the following bounds
%\[
%\emprisk{\betat{\ngen}} - \risk{\betat{\ngen}} \leq \sup_{\beta \in
%  \B_{\ngen}} \left| \risk{\beta} - \emprisk{\beta} \right| 
%\]
%and
\begin{align*}
  \risk{\betat{\ngen}} - \risk{\beta_{\ngen}} 
  & =
  \risk{\betat{\ngen}} - \emprisk{\betat{\ngen}} + 
  \emprisk{\betat{\ngen}} -  \risk{\beta_{\ngen}}  \\
& \leq
  \risk{\betat{\ngen}} - \emprisk{\betat{\ngen}}  + 
\emprisk{\beta_{\ngen}} -  \risk{\beta_{\ngen}} \\
& \leq
2\sup_{\beta \in \B_{\ngen}} \left| \risk{\beta} - \emprisk{\beta} \right|. 
\end{align*}
Using \autoref{lem:quad-form}
(See \autoref{sec:squared-error-loss} for notation)
\begin{equation*}
  \sup_{\beta \in \B_{\ngen}} \left| \risk{\beta} - \emprisk{\beta} \right|  
\leq 
\sup_{\beta\in \B_{\ngen}} \norm{\gamma}^2_1
  \norm{\Sign-\Sn}_\infty 
  \leq
(1+t_n)^2   \norm{\Sign-\Sn}_\infty .
%  & \leq (1+\ngen)^2  \norm{\Sign-\Sn}_\infty.
\end{equation*}
Therefore, 
\begin{equation}
\P\bigg((IV) \geq \delta/3 \bigg) 
 \leq 
\frac{3}{\delta}
\left(1 + t_n\right)^2 \bigg( C_4 \sqrt{\frac{(\log p)^{1 + 2/q}}{n}}\bigg).
\label{eq:IV}
\end{equation}

The proof follows by combining \refeq{eq:DnEn} with \refeq{eq:firstTerm} and using
the bounds from Equations \eqref{eq:I}, \eqref{eq:III}, and \eqref{eq:IV}.

Lastly, there are the various constants incurred in the course of this proof.  In \autoref{lem:orlicz},
the constant $\Psi$ can be chosen arbitrarily small based on inspecting the proof of Lemma 2.2.2 in
\citet{vanweak}.  As this constant premultiplies every term in $\Omega_{n,1}$ and $\Omega_{n,2}$,
we can without loss of generality set the constant equal to
one. For instance, in \autoref{lem:leaveKout}, the constant $C$ is upper bounded by
$8e^{1/2}(\log 2)^{(2-q)/(2q)}8^{1/q}C_q\Psi$.  This constant can be taken arbitrarily small
by choosing $\Psi$ small enough.
\end{proof}

\begin{lemma}
  \label{lem:tmaxUpperBound}
  Define the set 
  $
  D_n := \left\{ \tparm_{\max}  \leq 2n\Cf'/a_n \right\},
  $
  where $a_n$ is the normalizing rate defined in \autoref{cor:lasso}
  and $\Cf'=\Cf(\log 2)^{1/q-1}$. Then, 
  $\P(D^c_n) \leq e^{-\cConst n}$.
  \end{lemma}
\begin{proof} 
  By \autoref{cor:subExpY},
  \begin{align*}
    \P\left(\frac{\norm{Y}_2^2}{a_n}
    \geq \frac{n(\E [Y_1^2]+\delta)}{a_n} \right)
    &\leq \exp\left(-\cConst n 
   \min\left\{\frac{\delta^2}{\Cf^{'2}}, 
   \frac{\delta}{\Cf'}\right\}\right).
  \end{align*}
  Furthermore, $\E[Y_1^2] \leq \Cf'$, so 
  \begin{align*}
        \P\left(\frac{\norm{Y}_2^2}{a_n}
    \geq \frac{n(\Cf'+\delta)}{a_n} \right) 
    &\leq \P\left(\frac{\norm{Y}_2^2}{a_n} 
    \geq \frac{n(\E [Y_1^2]+\delta)}{a_n} \right).
  \end{align*}
  Therefore, setting $\delta=\Cf'$ yields
    $\P(\norm{Y}_2^2/a_n
    \geq 2nh/a_n ) 
    \leq e^{-\cConst n}.
  $
\end{proof}

\begin{lemma}
  \label{lem:tmaxLowerBound}
  Define the set 
  $
  E_n := \left\{ \tparm_{\max}  \geq t_n \right\}.
  $
  If $a_nt_n=o(n)$, then, for all $n>2a_nt_n/\E[Y_1^2]$,
  $
  \P(E^c_n) \leq \exp\left\{-\cConst n\right\}.
  $
\end{lemma}
\begin{proof}
  By \autoref{cor:subExpY},
  \begin{align*}
    \P\left(\frac{\norm{Y}_2^2}{a_n}
    \leq \frac{n(\E [Y_1^2]-\delta)}{a_n} \right)
    &\leq \exp\left(-\cConst n 
   \min\left\{\frac{\delta^2}{\Cf^{'2}}, 
   \frac{\delta}{\Cf'}\right\}\right),
  \end{align*}
  for all $\delta>0$. Setting $\delta=\E[Y_1^2]-a_nt_n/n$ therefore
  implies
  \begin{align*}
    \P\left(\tmax
    \leq t_n \right)
    &\leq \exp\left(-\cConst n 
   \min\left\{\frac{(\E[Y_1^2]-a_nt_n/n)^2}{\Cf^{'2}}, 
   \frac{\E[Y_1^2]-a_nt_n/n}{\Cf'}\right\}\right) \\
    &\leq \exp\left(-\cConst n 
   \min\left\{\frac{\E[Y_1^2]^2}{4\Cf^{'2}}, 
   \frac{\E[Y_1^2]}{2\Cf'}\right\}\right).
  \end{align*}
  Since $0<\E[Y_1^2]/ \Cf'\leq 1$, the result follows.
\end{proof}

\begin{proof}[\autoref{cor:lassolike}]
  For the $\sqrt{\mbox{lasso}}$, the result
  is nearly immediate as we are considering the same constraint set
  $\norm{\beta}_1\leq t$ and the same search space for the
  tuning parameter $T=[0,\ \norm{Y}_2^2/a_n]$. However, in
  Equations \eqref{eq:empmin1} and \eqref{eq:empmin2}, we rely on the empirical
  minimizer. The analogous results here are $\hatgamtT \Sign
    \hatgamt \leq \loogamtT \Sign \loogamt$ and $
  \loogamT{\mgen} \Signor \loogam{\mgen}
  \leq \hatgamT{\mgen} \Signor \hatgam{\mgen}$ respectively,
  but this implies that \eqref{eq:empmin1} and \eqref{eq:empmin2}
  hold. 
  
  For the group lasso with
  $\max_g\sqrt{|g|}=O(1)$, we have
  $
  t\geq \sum_{g \in G} \sqrt{|g|} \norm{\beta_g}_2 \geq  \norm{\beta}_1
  $
  so that \autoref{lem:tmaxLowerBound} still applies with $\tmax$ as
  before. We note that in this case, the oracle group linear model is
  restricted to the ball $\B_{t_n}=\{\beta : \sum_{g \in G} \sqrt{|g|}
  \norm{\beta_g}_2 \leq t_n\}$ rather than the larger set $\{\beta:
  \norm{\beta}_1 \leq t_n\}$.
\end{proof}

%\begin{proof}(\autoref{thm:lassoType})
%Returning now to equations  \eqref{eq:IaBound} through \eqref{eq:VandVIaBound} and substituting
%$u$ for $\gen$, we can form analogous bounds to the ones in the proof of \autoref{thm:lasso}.
%To this end, note that $\norm{\gamma}_1^2 \leq (1 + \norm{\beta}_1)^2$,  $(1 + \norm{\beta}_1)^2 \asymp \norm{\beta}_1^2$, $\norm{\beta}_1 \leq  \sum_{g\in G} \sqrt{|g|}\norm{(\beta_j)_{j \in g}}_2$, and $\max_{g \in G} |g| = a_n$.  Therefore,
%\begin{align*}
%\E \norm{\hat \gamma_{\uCV}}_1^4 
%& \leq \E\left(1 + \norm{\hat \beta_{\uCV}}_1\right)^4  \\
%& \asymp \E\norm{\hat \beta_{\uCV}}_1^4  \\
%& \leq  \E \left( \sum_{g\in G} \sqrt{|g|}\norm{(\hat \beta_{\uCV,j})_{j \in g}}_2 \right)^4\\
%& \leq  a_n^2\E\left(\sum_{g\in G}\norm{(\hat \beta_{\uCV,j})_{j \in g}}_2\right)^4 \\
%& \leq  a_n^2\E u_{\max}^4 \leq a_n^2u_n^4.
%\end{align*}
%The rest of the proof follows from the proof of \autoref{thm:lasso}.
%\end{proof}

\end{document}